\definecolor{coolblack}{rgb}{0.0, 0.18, 0.39}
\definecolor{darkslateblue}{rgb}{0.28, 0.24, 0.55}
\numberwithin{equation}{section}
\theoremstyle{definition}
\newtheorem{Definition}{Definition}[section]
\newtheorem{Example}[Definition]{Example}
\newtheorem{Remark}[Definition]{Remark}
\theoremstyle{plain}
\newtheorem{Theorem}[Definition]{Theorem}
\crefname{Theorem}{theorem}{theorems}
\Crefname{Theorem}{Theorem}{Theorems}
\newtheorem{Proposition}[Definition]{Proposition}
\crefname{Proposition}{proposition}{propositions}
\Crefname{Proposition}{Proposition}{Propositions}
\newtheorem{Corollary}[Definition]{Corollary}
\newtheorem{Lemma}[Definition]{Lemma}
\newlist{theoremparts}{enumerate}{3}
\setlist[theoremparts,1]{label=\textbf{(\alph*)},
                   ref  =\textbf{(\alph*)}}
\setlist[theoremparts,2]{label=\textbf{(\arabic*)},
                   ref  =\thetheorempartsi\textbf{(\alph*)}}
\setlist[theoremparts,3]{label=\bfseries(\roman*),
                   ref  =\thetheorempartsii\textbf{.(\roman*)}}
\crefname{theorempartsi}{part}{parts}
\crefname{theorempartsii}{part}{parts}
\crefname{theorempartsiii}{part}{parts}
\Crefname{theorempartsi}{Part}{Parts}
\Crefname{theorempartsii}{Part}{Parts}
\Crefname{theorempartsiii}{Part}{Parts}
\crefname{Corollary}{corollary}{corollaries}
\Crefname{Corollary}{Corollary}{Corollaries}
\DeclarePairedDelimiterX{\rvect}[1]{(}{)}{\,\makervect{#1}\,}
\NewDocumentCommand{\makervect}{m}
 {
  \seq_set_split:Nnn \l_tmpa_seq { , } { #1 }
  \begin{matrix}
  \seq_use:Nn \l_tmpa_seq { & }
  \end{matrix}
 }
\DeclareMathOperator{\Ad}{Ad}
\DeclareMathOperator{\diag}{diag}
\DeclareMathOperator{\End}{End}
\DeclareMathOperator{\wt}{wt}
\DeclareMathOperator{\U}{U}
\title{Action of $\mathfrak{osp}(1|2n)$ on polynomials tensor $\mathbb{C}^{0|2n}$}
\keywords{Lie superalgebras, orthosymplectic, oscillator representations. MSC2020: 17B10, 17B20}
\author{Dwight Anderson Williams II}
\date{\today}
\address{\parbox{\linewidth}{Department of Mathematics, Morgan State University\\1700 E Cold Spring Ln, Baltimore, MD, 21251, USA}}
\email{dwight@mathdwight.com}
\urladdr{https://mathdwight.com}
\begin{document}
        \begin{abstract}
For each positive integer $n$, the basic classical complex Lie superalgebra $\mathfrak{osp}(1|2n)$ has a unique equivalence class of infinite-dimensional completely-pointed modules, those weight modules with one-dimensional weight spaces. The polynomials $\mathbb{C}[x_{1},x_{2}, \ldots, x_{n}]$ in $n$ indeterminates is a choice representative. In the case $n > 1$, tensoring polynomials with the natural $\mathfrak{osp}(1|2n)$-module $\mathbb{C}^{1|2n}$ gives rise to a tensor product representation $V = \mathbb{C}[x_{1},x_{2}, \ldots, x_{n}] \otimes \mathbb{C}^{1|2n}$ of $\mathfrak{osp}(1|2n)$ that decomposes into two irreducible summands. These summands are understood through automorphisms of $V$ that we determine as intertwining operators describing the first summand as an isomorphic copy of $\mathbb{C}[x_{1},x_{2}, \ldots, x_{n}]$ and the second summand as $\mathbb{C}[x_{1},x_{2}, \ldots, x_{n}] \otimes \mathbb{C}^{2n}$, which does not have a natural $\mathfrak{osp}(1|2n)$-module structure and is not a paraboson Fock space with known bases. We present the intertwining operators as infinite diagonal block matrices of arrowhead matrices and give bases, along with formulas for the action of the odd root vectors (which generate $\mathfrak{osp}(1|2n)$) on these bases, for each of these conjugated oscillator realizations. We also revisit an expected difference: The decomposition of $\mathbb{C}[x] \otimes \mathbb{C}^{1|2}$ yields three irreducible summands instead of two.
    \end{abstract}
	
	\maketitle
	
	\section{Introduction}\label{sec:Intro}
If the $n$th Weyl algebra $A_{n}$ is the codomain of a Lie algebra homomorphism $\phi\!: \mathfrak{g} \rightarrow A_{n}$ from a Lie algebra $\mathfrak{g}$, then we say $\phi$ is a canonical realization \cite[Definition 1]{havlicekCanonicalRealizationsLie1975} of $\mathfrak{g}$. Classically \cite[Section 4.6]{dixmierEnvelopingAlgebras1996}, we have a canonical realization of the symplectic Lie algebra $\mathfrak{sp}(2n)$ through anti-commutators and the resulting oscillator representation as polynomials in $n$ (commuting) variables is a commonly studied \cite{benkartModulesBoundedWeight1997,brittenModulesBoundedMultiplicities1999} object. The methods of \cite{brittenModulesBoundedMultiplicities1999} and their references (in similar classical settings) include a natural tensoring of $\mathbb{C}[x_{1},x_{2},\ldots,x_{n}]$ with the standard $\mathfrak{sp}(2n)$-module $\mathbb{C}^{2n}$ in order to understand the representation theory of symplectic Lie algebras. The superalgebra perspective extends the usual symplectic canonical realization to the orthosymplectic setting; there is a surjection \cite{fronsdalEssaysSupersymmetry1986} with domain the universal enveloping algebra $U(\mathfrak{osp}(1|2n))$ of the orthosymplectic Lie superalgebra $\mathfrak{osp}(1|2n)$, which has even part a Lie algebra isomorphic to $\mathfrak{sp}(2n)$, and codomain $A_{n}$ \cite[see also][]{kanakoglouBraidedLookGreen2007, fergusonWeightModulesOrthosymplectic2015, williamsBasesInfinitedimensionalRepresentations2020}. Likewise, the space of polynomials is a well-studied oscillator representation \cite{nishiyamaOscillatorRepresentationsOrthosymplectic1990} of $\mathfrak{osp}(1|2n)$. More could be said about the physical applications of $\mathfrak{osp}(1|2n)$ \cite{ganchevLieSuperalgebraicInterpretation1980,kanakoglouBraidedLookGreen2007}, mathematical motivation \cite{coulembierOrthosymplecticLieSupergroup2010} to study orthosymplectic Lie superalgebras generally, and the history \cite{greenGeneralizedMethodField1953} of their infinite-dimensional representations in parastatistics.  

In this paper we continue the study of tensor product representations formed through the tensor products of oscillator representations and standard modules of Lie (super)algebras. The particular problem we address is the decomposition of $\mathbb{C}[x_{1},x_{2},\ldots,x_{n}] \otimes \mathbb{C}^{1|2n}$, where the super vector space $\mathbb{C}^{1|2n}$ is the standard representation of $\mathfrak{osp}(1|2n)$ and an object in the semisimple category of finite-dimensional $\mathfrak{osp}(1|2n)$-representations. (See Theorem 4.1 and the related definition of page 1 in \cite{djokovic1976semisimplicity}.) What is new here is the determination not only of irreducible summands in a complete decomposition of $\mathbb{C}[x_{1},x_{2},\ldots,x_{n}] \otimes \mathbb{C}^{1|2n}$ but the provision of bases of the summands and actions of the $\mathfrak{osp}(1|2n)$ generators upon these bases.

Now the study of infinite-dimensional representations of $\mathfrak{osp}(1|2n)$, even without the tasks of seeking bases, is generally difficult to pin down. There is some progress in determining bases of infinite-dimensional representations when limiting the scope to paraboson Fock spaces as in \cite{bisboBasesInfiniteDimensional2022a}. We note that the previous reference considers combinatorial methods to provide bases of the polynomial paraboson Fock spaces $L_{n}(p)$, that is, infinite-dimensional representations of $\mathfrak{osp}(1|2n)$ of lowest weight $(\frac{p}{2}, \ldots, \frac{p}{2})$. In contrast, we open a new direction of determining bases for infinite-dimensional $\mathfrak{osp}(1|2n)$-representations that are not paraboson Fock spaces.

Moreover, we introduce invertible intertwining operators as the crucial component to give bases for the pair of infinite-dimensional summands in the complete decomposition of the tensor product  $\mathbb{C}[x_{1}, x_{2}, \ldots, x_{n}] \otimes \mathbb{C}^{1|2n}$ into irreducible $\mathfrak{osp}(1|2)$ modules for $n > 1$. These summands are isomorphic to realizations of $\mathfrak{osp}(1|2n)$ in $\mathbb{C}[x_{1}, x_{2}, \ldots, x_{n}] \otimes \mathbb{C}^{0|1}$ and $\mathbb{C}[x_{1}, x_{2}, \ldots, x_{n}] \otimes \mathbb{C}^{0|2n}$, respectively. While $\mathbb{C}[x_{1}, x_{2}, \ldots, x_{n}] \otimes \mathbb{C}^{1|0}$ is equivalent to the usual oscillator representation $\mathcal{H}_{n}(1) \cong L_{n}(1)$ \cite[in the notation of][]{bisboBasesInfiniteDimensional2022a}, the $\mathfrak{osp}(1|2n)$-module $\mathbb{C}[x_{1}, x_{2}, \ldots, x_{n}] \otimes \mathbb{C}^{0|2n}$ is of highest-weight $(\frac{1}{2}, \frac{1}{2}, \ldots, \frac{1}{2}, -\frac{1}{2})$ and thus not isomorphic to any paraboson Fock space $L_{n}(p)$ of order $p$. Indeed, the intertwining operators we give in this paper reveal $\mathbb{C}[x_{1}, x_{2}, \ldots, x_{n}] \otimes \mathbb{C}^{0|2n}$ as an $\mathfrak{osp}(1|2n)$-supermodule despite the right tensor factor failing to be one itself. 

In the $n=1$ case, we have an additional summand in the decomposition of $\mathbb{C}[x] \otimes \mathbb{C}^{1|2}$, for a total of three, which is consistent with the theory of $\mathfrak{osp}(1|2)$-representations; see \cite[Theorem 8]{coulembierClassTensorProduct2013} or \cite{minnaertClebschGordanCoefficientsQuantum1994}. The decomposition has been described using the diagonal reduction algebra of $\mathfrak{osp}(1|2)$ \cite{hartwigDiagonalReductionAlgebra2022} and is a specific case of Theorem 1.3 in \cite{hartwigGhostCenterRepresentations2023}. We recall in the appendix a first-principles approach on why the decomposition of $\mathbb{C}[x] \otimes \mathbb{C}^{1|2}$ involves three summands instead of two and share a basis of the space of singular vectors.

\subsection{Summary of results}
The author provides \Cref{thm:basis-and-actions} that yields two intertwining operators whose restrictions lead to simple submodules of $\mathbb{C}[x_{1},x_{2}, \ldots, x_{n}] \otimes \mathbb{C}^{1|2n}$. \Cref{thm:basis-and-actions} yields \Cref{cor:decomposition-of-V}, which is the statement of the decomposition of $\mathbb{C}[x_{1},x_{2}, \ldots, x_{n}] \otimes \mathbb{C}^{1|2n}$ as an $\mathfrak{osp}(1|2n)$-module for $n > 1$. Additionally, \Cref{thm:basis-and-actions} leads to \Cref{cor:gam-osp-action,cor:gamt-osp-action} in which formulas are given for the action of $\mathfrak{osp}(1|2n)$ on polynomials tensor the space $\mathbb{C}^{1|0}$ and the action of $\mathfrak{osp}(1|2n)$ on polynomials tensor the space $\mathbb{C}^{0|2n}$, respectively.

Additionally, in \Cref{appendix:proof} of the appendix is the proof of \Cref{prop:space-of-singular-vectors} as a correction to the statement/proof of \cite[Theorem 7.1.1]{fergusonWeightModulesOrthosymplectic2015} (and its subsequent use in \cite{williamsBasesInfinitedimensionalRepresentations2020}) concerning the dimension of the space of singular vectors in the $\mathfrak{osp}(1|2)$-module $\mathbb{C}[x] \otimes \mathbb{C}^{1|2}$. 

\begin{Proposition}\label{prop:space-of-singular-vectors}\hfill

\noindent Let $(\rho, \mathbb{C}[x_{1},x_{2},\ldots,x_{n}])$ be the completely-pointed $\mathfrak{osp}(1|2n)$-representation, and let $(\tau,\mathbb{C}^{1|2n})$ be the natural matrix realization of $\mathfrak{osp}(1|2n)$. 
The space of singular vectors in the tensor product representation $(\rho \otimes \tau, \mathbb{C}[x_{1},x_{2},\ldots,x_{n}] \otimes \mathbb{C}^{1|2n})$ is two-dimensional, except when $n=1$, in which case the dimension is three.
\end{Proposition}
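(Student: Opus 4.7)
The plan is to realize the singular vector space as the common kernel of the simple negative root vectors of $\mathfrak{osp}(1|2n)$ acting on $\mathbb{C}[x_{1},\ldots,x_{n}] \otimes \mathbb{C}^{1|2n}$ via $\rho\otimes\tau$, reduce this to a finite-dimensional linear system, and count its solutions.

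First I would fix the Borel so that $1\in\mathbb{C}[x_{1},\ldots,x_{n}]$ is a lowest weight vector of $\rho$. Under this choice the $n$ simple negative root vectors are the $n-1$ even operators $X_{\epsilon_{i+1}-\epsilon_{i}}$ for $i=1,\ldots,n-1$ acting on polynomials as scalar multiples of $x_{i+1}\partial_{i}$, together with the single odd operator $X_{-\epsilon_{n}}$ acting as a scalar multiple of $\partial_{n}$. Their action on the natural factor is read off from $\tau$ against the basis $\{v_{0}\}\cup\{e_{i}\}\cup\{f_{i}\}$, and a vector is singular exactly when it is annihilated by each of these $n$ operators. Writing a generic candidate as
\[
v \;=\; p_{0}\otimes v_{0} \;+\; \sum_{j=1}^{n} q_{j}\otimes e_{j} \;+\; \sum_{j=1}^{n} r_{j}\otimes f_{j},
\]
and applying each operator through the super Leibniz rule produces a coupled system of first-order relations among $p_{0}, q_{j}, r_{j}$. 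A degree comparison in each slot of $\mathbb{C}^{1|2n}$ forces every unknown to be a polynomial of bounded total degree, so the classification reduces to finite-dimensional linear algebra on the coefficients.

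Two independent solutions appear for every $n\geq 1$: one in which $p_{0}$ is a nonzero constant accompanied by the linear corrections in the $r_{j}$ slots forced by the recursion (and with each $q_{j}$ vanishing), and a second supported on a single $f$-slot by a nonzero constant. The main obstacle is the $n=1$ degeneracy. For $n\geq 2$ the chain of even simple-root equations $X_{\epsilon_{i+1}-\epsilon_{i}}v=0$ propagates coupling relations through the $q_{j}$ slots and through the constant terms of $r_{2},\ldots,r_{n}$ that force every $q_{j}$ to vanish and pin down the remaining free parameters to the two already identified. When $n=1$ these chain equations are vacuous, only the single condition $X_{-\epsilon_{1}}v=0$ constrains the candidate, and a third independent solution emerges in which $q_{1}$ is a nonzero constant paired with a polynomial linear in $x_{1}$ on the $v_{0}$ slot and a polynomial quadratic in $x_{1}$ on the $f_{1}$ slot. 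Exhibiting this extra $n=1$ solution directly and verifying nondegeneracy for $n\geq 2$ completes the dimension count.
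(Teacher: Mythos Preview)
Your proposal is correct and follows essentially the same route as the paper: both reduce the problem to the linear system imposed by annihilation under the positive simple root vectors, identify the two universal solutions $w_{1}=1\otimes v_{2n}$ and $w_{2}=1\otimes v_{0}+\sqrt{2}\sum_{j}x_{j}\otimes v_{n+j}$, and observe that the even chain equations are vacuous when $n=1$, allowing the third solution $-\sqrt{2}\,x_{1}\otimes v_{0}+1\otimes v_{1}+x_{1}^{2}\otimes v_{n+1}$ to survive. The only organizational difference is that the paper first imposes all $n$ odd positive root vectors $X_{-\delta_{i}}$ (not just the simple one) to produce an explicit $(2n{+}1)$-dimensional intermediate space before intersecting with the kernel of the even simple root vectors $X_{\delta_{i}-\delta_{i+1}}$; this makes the degree bound you assert immediate rather than requiring a separate argument.
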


\begin{Theorem}\label{thm:basis-and-actions}\hfill

\noindent Denote the $\lambda$-weight space of $\mathbb{C}[x_{1},x_{2},\ldots,x_{n}] \otimes \mathbb{C}^{1|2n}$, viewed as an $\mathfrak{osp}(1|2n)$-weight module, by $(\mathbb{C}[x_{1},x_{2},\ldots,x_{n}] \otimes \mathbb{C}^{1|2n})_{\lambda}$. Write $\{X_{\pm j} \mid 1 \leq j \leq n \}$ for a generating set of odd root vectors of $\mathfrak{osp}(1|2n)$ and $\{v_{i} \mid 0 \leq i \leq 2n\}$ for the standard basis of $\mathbb{C}^{1|2n}$, where $v_{i} = \rvect{0\cdots0\!,\!1\!,\!0\cdots0}^{\intercal}$, with $1$ in the $(i+1)^{st}$ entry.
Then there exist automorphisms $\prescript{w_{1}}{}{\Gamma}$ and $\prescript{w_{2}}{}{\Gamma}$ of $\mathbb{C}[x_{1},x_{2}, \ldots, x_{n}] \otimes \mathbb{C}^{1|2n}$ such that the following hold:

\begin{theoremparts}
\item 
The image of $\,\mathbb{C}[x_{1},x_{2}, \ldots, x_{n}] \otimes \mathbb{C}^{1|0}$ under $\prescript{w_{2}}{}{\Gamma}$ has the structure of an $\mathfrak{osp}(1|2n)$-module.\label{thm:basis-and-actions:gam-image}

\item
The image of $\,\mathbb{C}[x_{1},x_{2}, \ldots, x_{n}] \otimes \mathbb{C}^{0|2n}$ under $\prescript{w_{1}}{}{\Gamma}$ has the structure of an $\mathfrak{osp}(1|2n)$-module.\label{thm:basis-and-actions:gamt-image}

\item 
The map $\prescript{w_{1}}{}{\Gamma}_{\lambda}$ denoting the restriction of $\prescript{w_{1}}{}{\Gamma}$ to $(\mathbb{C}[x_{1},x_{2},\ldots,x_{n}] \otimes \mathbb{C}^{1|2n})_{\lambda}$ is an automorphism of $(\mathbb{C}[x_{1},x_{2},\ldots,x_{n}] \otimes \mathbb{C}^{1|2n})_{\lambda}$.  Furthermore, $\prescript{w_{1}}{}{\Gamma}_{\lambda}$ can be expressed as a block within an infinite diagonal block matrix of invertible arrowhead matrices. \label{thm:basi-and-actions:arrowhead-gam}

\item 
The map $\prescript{w_{2}}{}{\Gamma}_{\lambda}$ denoting the restriction of $\prescript{w_{2}}{}{\Gamma}$ to $(\mathbb{C}[x_{1},x_{2},\ldots,x_{n}] \otimes \mathbb{C}^{1|2n})_{\lambda}$ is an automorphism of $(\mathbb{C}[x_{1},x_{2},\ldots,x_{n}] \otimes \mathbb{C}^{1|2n})_{\lambda}$. Furthermore, $\prescript{w_{2}}{}{\Gamma}_{\lambda}$ can be expressed as a block within an infinite diagonal block matrix of invertible arrowhead matrices. \label{thm:basi-and-actions:arrowhead-gamt}
\end{theoremparts}
\end{Theorem}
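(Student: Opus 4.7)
The plan is to construct each automorphism $\prescript{w_{i}}{}{\Gamma}$ from a corresponding singular vector $w_{i}$ guaranteed by \Cref{prop:space-of-singular-vectors}. First I would compute $w_{1}$ and $w_{2}$ explicitly in the canonical basis $\{x^{a} \otimes v_{i}\}$ of $V = \mathbb{C}[x_{1},\ldots,x_{n}] \otimes \mathbb{C}^{1|2n}$. Imposing $X_{j} \cdot w = 0$ for $1 \leq j \leq n$ yields a finite linear system whose two-dimensional solution space, for $n > 1$, is spanned by the representatives $w_{1}$ and $w_{2}$. By semisimplicity of the tensor product of a completely-pointed representation with the finite-dimensional module $\mathbb{C}^{1|2n}$, the cyclic submodules $M_{i} = U(\mathfrak{osp}(1|2n)) \cdot w_{i}$ are irreducible and $V = M_{1} \oplus M_{2}$.

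Next I would define each vector space automorphism $\prescript{w_{i}}{}{\Gamma}$ by specifying its weight-preserving action on the canonical basis. For \Cref{thm:basis-and-actions:gam-image} and \Cref{thm:basis-and-actions:gamt-image}, the definitions are arranged so that $\prescript{w_{2}}{}{\Gamma}$ maps $\mathbb{C}[x_{1},\ldots,x_{n}] \otimes \mathbb{C}^{1|0}$ bijectively onto one of the two submodules $M_{i_{0}}$, while on the complementary subspace $\mathbb{C}[x_{1},\ldots,x_{n}] \otimes \mathbb{C}^{0|2n}$ it is sent onto the other summand so that the full map is invertible; analogously, $\prescript{w_{1}}{}{\Gamma}$ maps $\mathbb{C}[x_{1},\ldots,x_{n}] \otimes \mathbb{C}^{0|2n}$ onto $M_{3-i_{0}}$. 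The images are then $\mathfrak{osp}(1|2n)$-submodules by construction, so parts \Cref{thm:basis-and-actions:gam-image} and \Cref{thm:basis-and-actions:gamt-image} follow.

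For \Cref{thm:basi-and-actions:arrowhead-gam} and \Cref{thm:basi-and-actions:arrowhead-gamt}, I would fix a weight $\lambda$ and analyze $\prescript{w_{i}}{}{\Gamma}_{\lambda}$ on $V_{\lambda}$. Each $V_{\lambda}$ has dimension at most $2n+1$ with a canonical basis consisting of a single vector of the form $x^{a} \otimes v_{0}$ together with vectors $x^{a - \wt(v_{j})} \otimes v_{j}$ for $1 \leq j \leq 2n$, placing $x^{a} \otimes v_{0}$ at the ``vertex'' of the arrowhead. Since $\prescript{w_{i}}{}{\Gamma}$ is weight-preserving and $V = \bigoplus_{\lambda} V_{\lambda}$, the global operator splits as a diagonal block matrix of the $\prescript{w_{i}}{}{\Gamma}_{\lambda}$. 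Under $\prescript{w_{i}}{}{\Gamma}_{\lambda}$, the vertex vector maps to a linear combination of all basis vectors in $V_{\lambda}$ (reflecting the expansion of the relevant lowered copy of $w_{i}$), while each of the other basis vectors maps to a linear combination involving only itself and the vertex; this is precisely the arrowhead pattern, and the entries are read off from the weight-$\lambda$ component of the iteratively lowered image of $w_{i}$ inside $M_{i}$. Invertibility of each $\prescript{w_{i}}{}{\Gamma}_{\lambda}$ then follows from the standard determinant formula for an arrowhead matrix. The main obstacle I anticipate is the explicit determination of the coefficients appearing in $\prescript{w_{i}}{}{\Gamma}_{\lambda}(x^{a} \otimes v_{0})$: these coefficients encode repeated applications of the odd lowering operators $X_{-j}$ to the singular vector $w_{i}$, and the combinatorial bookkeeping required to verify the arrowhead pattern uniformly across every weight space is the technical heart of the proof.
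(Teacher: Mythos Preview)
Your high-level strategy---build $\prescript{w_i}{}{\Gamma}$ from the singular vectors $w_i$, verify weight preservation, then analyze the restriction to each $V_\lambda$---matches the paper's outline, but the paper's proof hinges on an ingredient you do not have: an \emph{explicit closed-form operator} for each automorphism. The paper sets
\[
\prescript{w_1}{}{\Gamma}=\mathbbm{1}\otimes\mathbbm{1}-\sqrt{2}\sum_{i=1}^{n}\partial_{x_i}\otimes T_i+\sqrt{2}\sum_{i=1}^{n}x_i\otimes T_{-i},\qquad
\prescript{w_2}{}{\Gamma}=\mathbbm{1}\otimes\mathbbm{1}+\sqrt{2}\sum_{i=1}^{n}\partial_{x_i}\otimes T_i-\sqrt{2}\sum_{i=1}^{n}x_i\otimes T_{-i},
\]
and then everything follows mechanically. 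Because each $T_{\pm i}$ in the standard representation is rank one and moves only between $v_0$ and $v_i$ (or $v_{n+i}$), the arrowhead shape on $V_\lambda$ is immediate: applying $\Gamma$ to $\mathbf{x}^{\mathbf{k}}\otimes v_j$ with $j\neq 0$ returns that vector plus a single $v_0$-term, while applying it to $\mathbf{x}^{\mathbf{k}}\otimes v_0$ hits every $v_j$. The determinant is then read off as $1-2(n+C)\neq 0$. For parts (a) and (b) the paper does not invoke semisimplicity at all; it directly computes $X_{\pm\delta_j}\,\prescript{w_i}{}{\Gamma}$ on basis vectors and checks that the result lands back in $\prescript{w_i}{}{\Gamma}(V^{\mathrm{lil}})$ or $\prescript{w_i}{}{\Gamma}(V^{\mathrm{big}})$.

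The gap in your proposal is that your $\prescript{w_i}{}{\Gamma}$ is not well-defined. You specify it on $\mathbb{C}[\mathbf{x}]\otimes\mathbb{C}^{1|0}$ (say) by sending $Y_{\mathbf{k}}$ to an iterated lowering of $w_2$, but on the complementary piece $\mathbb{C}[\mathbf{x}]\otimes\mathbb{C}^{0|2n}$ you only say it should map ``onto the other summand so that the full map is invertible.'' There are infinitely many weight-preserving linear isomorphisms doing this, and almost none of them produce arrowhead matrices; the arrowhead claim (``each of the other basis vectors maps to a linear combination involving only itself and the vertex'') is asserted rather than derived. Without the global operator formula above---or some equivalent canonical choice that forces the off-diagonal of $\Gamma_\lambda$ to live in a single row and column---you cannot conclude parts (c) and (d), and the ``combinatorial bookkeeping'' you flag as the technical heart is in fact the discovery of that formula, not merely a verification step.
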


\begin{Corollary}[Decomposition of polynomials tensor standard module]\label{cor:decomposition-of-V}\hfill

\noindent Fix an integer $n > 1$. The super vector space $\mathbb{C}[x_{1},x_{2}, \ldots, x_{n}] \otimes \mathbb{C}^{1|2n}$ decomposes as an $\mathfrak{osp}(1|2n)$-module equal to the direct sum $\prescript{w_{1}}{}{\Gamma}\big(\mathbb{C}[x_{1},x_{2},\ldots,x_{n}] \otimes \mathbb{C}^{0|2n}\big) \oplus \prescript{w_{2}}{}{\Gamma}\big(\mathbb{C}[x_{1},x_{2},\ldots,x_{n}] \otimes \mathbb{C}^{1|0}\big)$.
\end{Corollary}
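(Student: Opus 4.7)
The plan is to set $S_{1} := \prescript{w_{1}}{}{\Gamma}\bigl(\mathbb{C}[x_{1},\ldots,x_{n}] \otimes \mathbb{C}^{0|2n}\bigr)$ and $S_{2} := \prescript{w_{2}}{}{\Gamma}\bigl(\mathbb{C}[x_{1},\ldots,x_{n}] \otimes \mathbb{C}^{1|0}\bigr)$, to establish that both are $\mathfrak{osp}(1|2n)$-submodules of $V := \mathbb{C}[x_{1},\ldots,x_{n}] \otimes \mathbb{C}^{1|2n}$, and to verify the direct-sum decomposition $V = S_{1} \oplus S_{2}$ by combining a weight-space character count with the singular-vector count of \Cref{prop:space-of-singular-vectors}.

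I would first invoke parts \ref{thm:basis-and-actions:gam-image} and \ref{thm:basis-and-actions:gamt-image} of \Cref{thm:basis-and-actions}, which directly state that $S_{1}$ and $S_{2}$ carry $\mathfrak{osp}(1|2n)$-module structures as subspaces of $V$, so both are submodules. Next, parts \ref{thm:basi-and-actions:arrowhead-gam} and \ref{thm:basi-and-actions:arrowhead-gamt} of \Cref{thm:basis-and-actions} say that each restriction $\prescript{w_{i}}{}{\Gamma}_{\lambda}$ is an automorphism of $V_{\lambda}$; hence $\prescript{w_{1}}{}{\Gamma}$ maps $(\mathbb{C}[x_{1},\ldots,x_{n}] \otimes \mathbb{C}^{0|2n})_{\lambda}$ bijectively onto a subspace of $V_{\lambda}$ of the same dimension, and analogously for $\prescript{w_{2}}{}{\Gamma}$. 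Because $\mathbb{C}^{1|2n} = \mathbb{C}^{1|0} \oplus \mathbb{C}^{0|2n}$ as super vector spaces, summing these dimensions yields the character identity $\dim (S_{1})_{\lambda} + \dim (S_{2})_{\lambda} = \dim V_{\lambda}$ at every weight $\lambda$.

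To conclude, I would show $S_{1} \cap S_{2} = 0$, which combined with the character identity forces $V = S_{1} \oplus S_{2}$. Each $S_{i}$ is cyclically generated by (the image under $\prescript{w_{i}}{}{\Gamma}$ of) a distinguished extremal-weight vector of the corresponding tensor factor: weight $(\tfrac{1}{2},\ldots,\tfrac{1}{2})$ for $S_{2}$ and weight $(\tfrac{1}{2},\ldots,\tfrac{1}{2},-\tfrac{1}{2})$ for $S_{1}$. Both images are singular vectors of $V$ (the intertwining property transports singular vectors to singular vectors), are linearly independent, and by \Cref{prop:space-of-singular-vectors} with $n > 1$ span the full two-dimensional space of singular vectors in $V$. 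The weight supports of $S_{1}$ and $S_{2}$ are distinct---in particular, $S_{2}$ sees only weights in the cone where every coordinate is at least $\tfrac{1}{2}$, while $S_{1}$ realises the weight $(\tfrac{1}{2},\ldots,\tfrac{1}{2},-\tfrac{1}{2})$. Consequently, a nonzero intersection $S_{1} \cap S_{2}$ would have to contain a singular vector of $V$---necessarily a scalar multiple of one of the two generators---which would then generate both summands, forcing the contradictory identification $S_{1} = S_{2}$.

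The step I expect to require the most care is justifying that the putative nonzero submodule $S_{1} \cap S_{2}$ must contain one of the two generating singular vectors furnished by \Cref{prop:space-of-singular-vectors}. This rests on the bounded-weight (oscillator) structure each $S_{i}$ inherits through \Cref{thm:basis-and-actions}: any nonzero weight submodule whose weights are bounded in the appropriate direction must contain a singular vector. Once this is established, the incompatible weight supports of the two generators drive the intersection to zero and complete the decomposition.
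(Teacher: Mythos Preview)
Your overall strategy matches the paper's: use \Cref{thm:basis-and-actions} to see that $S_{1}$ and $S_{2}$ are submodules with the correct weight-space dimensions, and use \Cref{prop:space-of-singular-vectors} to control singular vectors. The paper's argument is one sentence: since $\prescript{w_{1}}{}{\Gamma}$ and $\prescript{w_{2}}{}{\Gamma}$ are (weight-preserving) automorphisms and $V = V^{\text{lil}} \oplus V^{\text{big}}$, the conclusion follows from \Cref{prop:space-of-singular-vectors}. Implicitly the paper is using the decomposition $V = U(\mathfrak{g})w_{1} \oplus U(\mathfrak{g})w_{2}$ already recorded in \Cref{sec:space-singular-vectors}: once you observe $w_{1} = \prescript{w_{1}}{}{\Gamma}(1\otimes v_{2n}) \in S_{1}$ and $w_{2} = \prescript{w_{2}}{}{\Gamma}(1\otimes v_{0}) \in S_{2}$, the inclusions $U(\mathfrak{g})w_{i} \subseteq S_{i}$ together with your character identity force equality and hence directness.

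Your disjointness argument, by contrast, is more circuitous and has a soft spot. The claim that a singular vector in $S_{1}\cap S_{2}$ ``would then generate both summands, forcing $S_{1}=S_{2}$'' is not justified: if, say, $w_{2}\in S_{1}\cap S_{2}$, you only get $S_{2}=U(\mathfrak{g})w_{2}\subseteq S_{1}$, not the reverse inclusion, and your weight-support remark only rules out $w_{1}\in S_{2}$, not $w_{2}\in S_{1}$. You also assert without proof that $S_{1}$ is cyclic on $w_{1}$. All of this is bypassed if you invoke the irreducible decomposition $V=\bigoplus_{i} U(\mathfrak{g})w_{i}$ from \Cref{prop:space-of-singular-vectors} (and the complete-reducibility statement in \Cref{sec:space-singular-vectors}) directly, which is what the paper does.
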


\begin{Corollary}[Action of $\mathfrak{osp}(1|2n)$ on polynomials tensor even part of standard module]\label{cor:gam-osp-action}\hfill

\item Let $1 \leq j \leq n$. The $\mathfrak{osp}(1|2n)$-action on $\mathbb{C}[x_{1},x_{2},\ldots,x_{n}] \otimes \mathbb{C}^{1|0}$ is characterized by 
\begin{align*}
X_{j}(x_{1}^{k_{1}}x_{2}^{k_{2}} \cdots x_{n}^{k_{n}} \otimes v_{0}) &= 
-\frac{1}{\sqrt{2}}x_{1}^{k_{1}} \cdots x_{j}^{k_{j}+1} \cdots x_{n}^{k_{n}} \otimes v_{0}\\
X_{-j}(x_{1}^{k_{1}}x_{2}^{k_{2}} \cdots x_{n}^{k_{n}} \otimes v_{0}) &= 
-\frac{1}{\sqrt{2}}k_{j}x_{1}^{k_{1}} \cdots x_{j}^{k_{j}-1} \cdots x_{n}^{k_{n}} \otimes v_{0}
\end{align*} 
\end{Corollary}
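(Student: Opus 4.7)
The plan is to exploit \Cref{thm:basis-and-actions}\ref{thm:basis-and-actions:gam-image}, which establishes that $\prescript{w_{2}}{}{\Gamma}$ embeds $\mathbb{C}[x_{1},\ldots,x_{n}] \otimes \mathbb{C}^{1|0}$ as an $\mathfrak{osp}(1|2n)$-submodule of $V = \mathbb{C}[x_{1},\ldots,x_{n}] \otimes \mathbb{C}^{1|2n}$, and to transport that action back along $\prescript{w_{2}}{}{\Gamma}$ via $Y \cdot u := \prescript{w_{2}}{}{\Gamma}^{-1}\bigl(Y \cdot \prescript{w_{2}}{}{\Gamma}(u)\bigr)$. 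The corollary is then a direct identification of the transported action with the usual oscillator realization on the polynomial ring tensored with the one-dimensional even space.

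First I would recall the canonical realization of $\mathfrak{osp}(1|2n)$ in the Weyl algebra $A_{n}$: with the sign and normalization conventions fixed by the choice of odd generators $X_{\pm j}$ made earlier in the paper, $X_{j}$ corresponds to $-\frac{1}{\sqrt{2}}\, x_{j}$ (multiplication) and $X_{-j}$ corresponds to $-\frac{1}{\sqrt{2}}\, \partial/\partial x_{j}$. On the monomial basis this yields exactly the two displayed formulas, so the task reduces to matching the transported action with this canonical realization after the even identification $p \otimes v_{0} \leftrightarrow p$.

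For the matching step, the decisive point is that $\prescript{w_{2}}{}{\Gamma}(\mathbb{C}[x_{1},\ldots,x_{n}] \otimes \mathbb{C}^{1|0})$ is a completely-pointed weight module whose weight support coincides with that of $(\rho,\mathbb{C}[x_{1},\ldots,x_{n}])$; the uniqueness (up to equivalence) of the infinite-dimensional completely-pointed $\mathfrak{osp}(1|2n)$-module recalled in the abstract then forces a module isomorphism between the two, which by Schur-type rigidity is determined up to a scalar on each one-dimensional weight space. Pinning down this scalar on a single vector (for instance the constant monomial tensored with $v_{0}$) using the explicit arrowhead-block description of $\prescript{w_{2}}{}{\Gamma}_{\lambda}$ from \Cref{thm:basis-and-actions}\ref{thm:basi-and-actions:arrowhead-gamt}, and then propagating via the anticommutation relations $\{X_{j},X_{-k}\} \propto \delta_{jk}$ and the fact that $\{X_{\pm j}\}$ generates $\mathfrak{osp}(1|2n)$, gives the stated formulas on every monomial basis element.

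The main obstacle will be bookkeeping: one must track how the explicit arrowhead entries of $\prescript{w_{2}}{}{\Gamma}$ interact with the $\mathfrak{osp}(1|2n)$-action on $V$ inherited from $\rho \otimes \tau$, and in particular verify that the apparent mixing of $v_{0}$ with the $v_{i}$ for $i > 0$ in each arrowhead row cancels out upon passage through $\prescript{w_{2}}{}{\Gamma}^{-1}$ on the restricted domain, so that the transported action indeed preserves the $v_{0}$-line up to the expected polynomial factor. Once this cancellation is verified on one weight and the overall normalization $-\frac{1}{\sqrt{2}}$ is confirmed, the remainder of the corollary follows immediately from the generating property of the $X_{\pm j}$.
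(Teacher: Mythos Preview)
Your approach differs from the paper's and has a genuine gap. The paper never invokes uniqueness of completely-pointed modules: the formulas in the corollary are read off directly from the intertwining identities
\[
X_{\delta_{j}}\,\prescript{w_{2}}{}{\Gamma}(Y_{\mathbf{k}}) = \prescript{w_{2}}{}{\Gamma}\bigl(-\tfrac{1}{\sqrt{2}}Y_{\mathbf{k}+\delta_{j}}\bigr), \qquad
X_{-\delta_{j}}\,\prescript{w_{2}}{}{\Gamma}(Y_{\mathbf{k}}) = \prescript{w_{2}}{}{\Gamma}\bigl(-\tfrac{1}{\sqrt{2}}k_{j}Y_{\mathbf{k}-\delta_{j}}\bigr),
\]
which are established for \emph{every} $\mathbf{k}$ by the explicit computations inside the proof of \Cref{thm:basis-and-actions}. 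Conjugating by $\prescript{w_{2}}{}{\Gamma}^{-1}$ and restricting to $V^{\text{lil}}$ then \emph{is} the corollary; no further argument is needed.

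Your uniqueness argument only produces an isomorphism of modules, not equality of matrix entries in the fixed $Y_{\mathbf{k}}$-basis. Two irreducible completely-pointed structures on the same weight-graded vector space can differ by an arbitrary diagonal rescaling $Y_{\mathbf{k}} \mapsto c_{\mathbf{k}} Y_{\mathbf{k}}$, and the relation $\{X_{j},X_{-j}\} = H_{2\delta_{j}}$ constrains only the \emph{products} of raising and lowering coefficients along each $j$-string, not the coefficients individually. So ``pin down one scalar and propagate'' does not determine the action on a general $Y_{\mathbf{k}}$; to close the argument you would still have to compute $X_{\pm\delta_{j}}\,\prescript{w_{2}}{}{\Gamma}(Y_{\mathbf{k}})$ for arbitrary $\mathbf{k}$, which is precisely the paper's direct route. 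Note also that your recalled oscillator realization carries the wrong sign: in the paper's conventions $\phi(X_{\delta_{j}}) = +\tfrac{1}{\sqrt{2}} x_{j}$, so the minus sign appearing in the corollary is a genuine feature of the $\prescript{w_{2}}{}{\Gamma}$-conjugated action and is not something you can simply ``recall'' from $\phi$.
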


\begin{Corollary}[Action of $\mathfrak{osp}(1|2n)$ on polynomials tensor odd part of standard module]\label{cor:gamt-osp-action}\hfill

\item Let $1 \leq j \leq n$. The $\mathfrak{osp}(1|2n)$-action on $\mathbb{C}[x_{1},x_{2},\ldots,x_{n}] \otimes \mathbb{C}^{0|2n}$ is characterized by 
\begin{align*}
X_{j}\Big(x_{1}^{k_{1}}x_{2}^{k_{2}} \cdots x_{n}^{k_{n}} \otimes v_{i}\Big) &= 
\frac{1}{\sqrt{2}}x_{1}^{k_{1}} \cdots x_{j}^{k_{j}+1} \cdots x_{n}^{k_{n}} \otimes v_{i} - \sqrt{2} x_{1}^{k_{1}} \cdots x_{i}^{k_{i}+1} \cdots x_{n}^{k_{n}} \otimes v_{j}\\
X_{j}(x_{1}^{k_{1}}x_{2}^{k_{2}} \cdots x_{n}^{k_{n}} \otimes v_{n+i}) &= \frac{1}{\sqrt{2}}x_{1}^{k_{1}} \cdots x_{j}^{k_{j}+1} \cdots x_{n}^{k_{n}} \otimes v_{n+i} + \sqrt{2}k_{i}x_{1}^{k_{1}} \cdots x_{i}^{k_{i}-1} \cdots x_{n}^{k_{n}} \otimes v_{j}\\
X_{-j}(x_{1}^{k_{1}}x_{2}^{k_{2}} \cdots x_{n}^{k_{n}} \otimes v_{i}) &= \frac{1}{\sqrt{2}}k_{j}x_{1}^{k_{1}} \cdots x_{j}^{k_{j}-1} \cdots x_{n}^{k_{n}} \otimes v_{i} + \sqrt{2}x_{1}^{k_{1}} \cdots x_{i}^{k_{i}+1} \cdots x_{n}^{k_{n}} \otimes v_{n+j}\\
X_{-j}(x_{1}^{k_{1}}x_{2}^{k_{2}} \cdots x_{n}^{k_{n}} \otimes v_{n+i}) &= \frac{1}{\sqrt{2}}k_{j}x_{1}^{k_{1}} \cdots x_{j}^{k_{j}-1} \cdots x_{n}^{k_{n}} \otimes v_{n+i} - \sqrt{2}k_{i}x_{1}^{k_{1}} \cdots x_{i}^{k_{i}-1} \cdots x_{n}^{k_{n}} \otimes v_{n+j}
\end{align*} 
\end{Corollary}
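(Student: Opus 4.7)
The plan is to transport the $\mathfrak{osp}(1|2n)$-action on the submodule $\prescript{w_{1}}{}{\Gamma}\bigl(\mathbb{C}[x_{1},\ldots,x_{n}] \otimes \mathbb{C}^{0|2n}\bigr)$ of $\mathbb{C}[x_{1},\ldots,x_{n}] \otimes \mathbb{C}^{1|2n}$ back to $\mathbb{C}[x_{1},\ldots,x_{n}] \otimes \mathbb{C}^{0|2n}$ via the intertwiner $\prescript{w_{1}}{}{\Gamma}$. By \Cref{thm:basis-and-actions}\ref{thm:basis-and-actions:gamt-image}, that image is an $\mathfrak{osp}(1|2n)$-submodule of the enclosing tensor product, so declaring
\[X \bullet u \;:=\; \prescript{w_{1}}{}{\Gamma}^{-1}\bigl((\rho \otimes \tau)(X)\cdot \prescript{w_{1}}{}{\Gamma}(u)\bigr)\]
for odd $X \in \mathfrak{osp}(1|2n)$ yields the asserted module structure, and the corollary reduces to evaluating this pulled-back action on monomial basis vectors $x^{\mathbf{k}} \otimes v_{i}$ and $x^{\mathbf{k}} \otimes v_{n+i}$ for the generators $X = X_{\pm j}$.

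First I would record the explicit action of the generators on each tensor factor. On the oscillator representation $\rho$, the odd root vectors act as ladder operators: $\rho(X_{j})$ is a scalar multiple of multiplication by $x_{j}$ and $\rho(X_{-j})$ is the matching scalar multiple of $\partial_{x_{j}}$, with coefficients $\pm\tfrac{1}{\sqrt{2}}$ consistent with \Cref{cor:gam-osp-action}. On the natural module $(\tau,\mathbb{C}^{1|2n})$, the $X_{\pm j}$ act as explicit elementary supermatrices that exchange $v_{0}$ with $v_{j}$ or $v_{n+j}$ up to sign and pair $v_{i} \leftrightarrow v_{n+i}$ in a prescribed way. Combining via the super-Leibniz rule gives a closed formula for $(\rho \otimes \tau)(X_{\pm j})(x^{\mathbf{k}} \otimes v_{i})$ with a ``polynomial'' contribution (shifting one exponent in $\mathbf{k}$) and a ``matrix-shift'' contribution (replacing $v_{i}$ by $v_{0}$, $v_{j}$, or $v_{n+j}$).

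The computation itself proceeds in three stages. In the first stage, I apply $\prescript{w_{1}}{}{\Gamma}$ to $x^{\mathbf{k}} \otimes v_{i}$ and to $x^{\mathbf{k}} \otimes v_{n+i}$; because $\prescript{w_{1}}{}{\Gamma}$ preserves weight spaces and each restriction $\prescript{w_{1}}{}{\Gamma}_{\lambda}$ is an invertible arrowhead block by \Cref{thm:basis-and-actions}\ref{thm:basi-and-actions:arrowhead-gam}, the image is a short explicit linear combination of monomial tensors lying in the same weight space. In the second stage, I apply $(\rho\otimes\tau)(X_{\pm j})$ termwise using the formulas of the previous paragraph. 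In the third stage, I apply $\prescript{w_{1}}{}{\Gamma}^{-1}$ using the closed-form inverse of the arrowhead blocks; the arrowhead shape is engineered precisely so that the many cross-terms produced in the second stage collapse under the inverse into the two monomial tensors displayed in each formula of the corollary, with the stated $\tfrac{1}{\sqrt{2}}$ and $\pm\sqrt{2}$ coefficients.

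The main obstacle is bookkeeping: the $\mathbb{Z}/2$-grading introduces sign factors in the super-Leibniz rule that must be tracked consistently through both $\prescript{w_{1}}{}{\Gamma}$ and its inverse, and the telescoping of arrowhead-inverse products is where the sharp coefficients $\tfrac{1}{\sqrt{2}}$ and $\pm\sqrt{2}$ emerge from genuine cancellations rather than by inspection. Once the four cases $(X_{j}, v_{i})$, $(X_{j}, v_{n+i})$, $(X_{-j}, v_{i})$, $(X_{-j}, v_{n+i})$ are verified on monomials, $\mathbb{C}$-linearity extends the formulas to all of $\mathbb{C}[x_{1},\ldots,x_{n}] \otimes \mathbb{C}^{0|2n}$, and the fact that $\{X_{\pm j} \mid 1 \leq j \leq n\}$ generates $\mathfrak{osp}(1|2n)$ promotes the generator-level formulas to a complete characterization of the action, as claimed.
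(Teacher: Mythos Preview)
Your plan is correct and matches the paper's approach: the action on $\mathbb{C}[\mathbf{x}]\otimes\mathbb{C}^{0|2n}$ is obtained by conjugating the tensor-product action by $\prescript{w_{1}}{}{\Gamma}$ and restricting to $V^{\text{big}}$, exactly as you describe. The one tactical difference is your third stage: rather than computing $\prescript{w_{1}}{}{\Gamma}^{-1}$ via the closed-form inverse of the arrowhead blocks, the paper simply applies $\prescript{w_{1}}{}{\Gamma}$ \emph{forward} to the conjectured two-term answer and checks that it agrees with $X_{\pm\delta_{j}}\,\prescript{w_{1}}{}{\Gamma}(Z_{\mathbf{k},\ast})$; since $\prescript{w_{1}}{}{\Gamma}$ is already known to be invertible, equality of images suffices, and no arrowhead inversion or telescoping is needed.
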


\begin{figure}[ht!]
\tikzset{column sep=small, ampersand replacement=\&}
\centering
\begin{tikzcd}[scale cd = 1]    
{\mathbb{C}[x_{1},x_{2},\ldots,x_{n}] \otimes \mathbb{C}^{1|0}} \arrow[rr, "\prescript{w_{2}}{}{\Gamma}",] \arrow[dd] \&                                                                                                    \& {\prescript{w_{2}}{}{\Gamma}\big(\mathbb{C}[x_{1},x_{2},\ldots,x_{n}] \otimes \mathbb{C}^{1|0}\big)} \arrow[dd, "\rho \otimes \tau"] \\
                                                                      \& {} \arrow[loop, distance=3em, in=131, out=251] \arrow[no head, loop, distance=3em, in=131, out=251] \&                                    \\
{\mathbb{C}[x_{1},x_{2},\ldots,x_{n}] \otimes \mathbb{C}^{1|0}} \arrow[rr, "\prescript{w_{2}}{}{\Gamma}"']                                          \&                                                                                                     \& {\prescript{w_{2}}{}{\Gamma}\big(\mathbb{C}[x_{1},x_{2},\ldots,x_{n}] \otimes \mathbb{C}^{1|0}\big)}                                
\end{tikzcd}{\caption{Realization of $\mathfrak{osp}(1|2n)$ in $\End(\mathbb{C}[x_{1},x_{x},\ldots,x_{n}]\otimes\mathbb{C}^{1|0})$}}
\end{figure}

\begin{figure}[ht!]
\tikzset{column sep=small, ampersand replacement=\&}
\centering
\begin{tikzcd}[scale cd = 1]
{\mathbb{C}[x_{1},x_{2},\ldots,x_{n}] \otimes \mathbb{C}^{0|2n}} \arrow[rr, "\prescript{w_{1}}{}{\Gamma}",] \arrow[dd] \&                                                                                                    \& {\prescript{w_{2}}{}{\Gamma}\big(\mathbb{C}[x_{1},x_{2},\ldots,x_{n}] \otimes \mathbb{C}^{0|2n}\big)} \arrow[dd, "\rho \otimes \tau"] \\
                                                                      \& {} \arrow[loop, distance=3em, in=131, out=251] \arrow[no head, loop, distance=3em, in=131, out=251] \&                                    \\
{\mathbb{C}[x_{1},x_{2},\ldots,x_{n}] \otimes \mathbb{C}^{0|2n}} \arrow[rr, "\prescript{w_{1}}{}{\Gamma}"']                                          \&                                                                                                     \& {\prescript{w_{1}}{}{\Gamma}\big(\mathbb{C}[x_{1},x_{2},\ldots,x_{n}] \otimes \mathbb{C}^{0|2n}\big)}                                
\end{tikzcd}{\caption{Realization of $\mathfrak{osp}(1|2n)$ in $\End(\mathbb{C}[x_{1},x_{x},\ldots,x_{n}]\otimes\mathbb{C}^{0|2n})$}}
\end{figure}

\begin{Remark}
    Arrowhead matrices are defined in \cite[for a reference]{marreroAssociatingHubdirectedMultigraphs2018}. Here we rely on a particular type of arrowhead matrix which eases recognition of nonsingularity. 
\end{Remark}
    We note that Coulembier \cite{coulembierClassTensorProduct2013} has developed general results on tensor product representations of orthosymplectic Lie superalgebras of types \cite{kacLieSuperalgebras1977} $B$, $C$, and $D$. The current paper grounds realizations of $\mathfrak{osp}(1|2n)$ in detailing a usual object in an unexpected role. In particular, Coulembier acknowledges initially overlooking the $B(0,n)$ case in the published version of \cite{coulembierClassTensorProduct2013a}. The joint endeavor of large-scale parameterization of representation classes with examples uncovered and made more concrete in formulation of bases is useful for understanding the algebraic symmetry at work and prepping for applications. 

For an outline of the paper, we recall in \Cref{sec:Background} the introductory facts of orthosymplectic Lie superalgebras of type $B(0|n) = \mathfrak{osp}(1|2n)$ and their representation theory. \Cref{sec:DecomposingTPR} contains discussion and proofs of \Cref{thm:basis-and-actions} and \Cref{cor:decomposition-of-V,cor:gam-osp-action,cor:gamt-osp-action}, beginning with a recollection of a basis of singular vectors in the tensor product representation $\mathbb{C}[x_{1}, x_{2}, \ldots, x_{n}] \otimes \mathbb{C}^{1|2n}$ for $n >1 $. The appendix has a proof of \Cref{prop:space-of-singular-vectors} and a review of algebraic notions in supermathematics.

	\section*{Acknowledgments}
	The author appreciates the encouragement and love of Margine "BigMa" Griffin.  The notation in Section \ref{sec:DecomposingTPR} is inspired by her Bid Whist talents and her joy for life: Even without the big and lil jokers, she's the one who'll make you laugh.
	\section{Background}\label{sec:Background}
	Assumed categorical and algebraic notions are found in \cite{waltonSymmetriesAlgebras2024}.
	Fix the ground field $\mathbb{C}$ throughout. Denote the two-element group $\mathbb{Z}/2\mathbb{Z}$ by $\mathbb{Z}_{2}$. We establish additional conventions: We use multi-index notation to express $\mathbb{C}[\mathbf{x}] = \mathbb{C}[x_1, x_2, \dots, x_n]$, $\mathbf{k} = (k_{1}, k_{2}, \ldots, k_{n}) \in \mathbb{Z}_{\geq 0}$, $\mathbf{x}^{\mathbf{k}} = x_{1}^{k_{1}} x_{2}^{k_{2}} \cdots x_{n}^{k_{n}}$, and  $\deg(\mathbf{x}^{\mathbf{k}}) = |\mathbf{k}| = \sum\limits_{i=1}^{n} k_{i}$.
	Let $E_{ij}$ denote the $n\times n$ elementary matrix of all zeroes except 1 in the $ij$-entry. The meaning of $\pm$ or $\mp$ is given independent of occurrences in parallel: Precisely two expressions are presented in $x \pm y \mp z$ (those are $x + y - z$ and $x - y + z$) or in $x \pm y \pm z$  (those are $x + y + z$ and $x - y - z$).
	
	Certain elements of $\mathbb{C}^{n}$ are given special notation as they will be helpful to denote roots of Lie (super)algebras: 
	$\delta_{j} = \underset{1~\text{in}~j^{th}~\text{position}}{\underbrace{(0,\ldots, 0, 1, 0, \dots, 0)}}$ and $\nu_{j} = \sum\limits_{j=1}^{n} \delta_{i}$.
    
  We view our base field $\mathbb{C}= \mathbb{C}\oplus \{0\}$ as a purely even superring with its supermodules termed super vector spaces; an even bilinear form maps each pair of elements having opposite parity to zero. See \Cref{appendix:supermath} for more on super notions.
	
    \subsection{On the Lie superalgebra \texorpdfstring{$\mathfrak{osp}(1|2n)$}{osp(1|2n)}}
	We briefly restate some key facts about $\mathfrak{osp}(1|2n)$ before discussing representation theory. The Type II basic classical Lie superalgebra $\mathfrak{osp}(1|2n)$ is the super vector space of dimension $(2n^{2}+n|2n)$, super dimension $2n^{2}-n$, which preserves an even, nondegenerate, symmetric\footnote{Or supersymmetric. See the comments of Leites in \cite{leitesOddParametersGeometry2022}.} bilinear form on $\mathbb{C}^{1|2n}$. The set \begin{align}\label{osp-basis-sym}
	\{X_{\pm\delta_{i} \pm \delta_{j}},X_{\pm 2\delta_{j}}, X_{\delta_{i} - \delta_{j}}, H_{2\delta_{j}}; X_{\pm \delta_{j}} \mid 1 \leq i \neq j \leq n \}
	\end{align} 
	forms a basis of the super vector space $\mathfrak{osp}(1|2n)$, and as a Lie superalgebra, $\mathfrak{osp}(1|2n)$ is generated by the set $\{ X_{\delta_{j}}, X_{-\delta_{j}} \}$ of odd basis elements with super Lie bracket \begin{equation} \label{rels-osp}
[[X_{\xi \delta_j}, X_{\eta \delta_k}], X_{\epsilon \delta_l}] = (\epsilon-\xi) \delta_{jl} X_{\eta \delta_k} + (\epsilon-\eta) \delta_{kl} X_{\xi \delta_j},
\end{equation}
where $\xi,\eta,\epsilon \in \{-1,1\}$, as in \cite{ganchevLieSuperalgebraicInterpretation1980}. 

	\subsubsection{Standard representation}
	For a super vector space $V = V_{\bar{0}} \oplus V_{\bar{1}}$, denote by $\mathfrak{gl}(V)$ the set of all endomorphisms on $V$ (with no regard to parity). Then $\mathfrak{gl}(V)$ is a Lie superalgebra called the general linear Lie superalgebra under the super commutator on endomorphisms.
	In the finite-dimensional case, $\mathfrak{gl}(V) \cong \mathfrak{gl}(\mathbb{C}^{m|n}) = \mathfrak{gl}(m|n)$ if and only if $\dim V = (m|n)$ (and a basis of homogeneous elements $\{ z_{1}, z_{2}, \ldots, z_{m}; z_{m+1}, \ldots, z_{m+n} \}$ is fixed). Thus for V = $\mathbb{C}^{1|2n}$, let $\mathfrak{gl}(1|2n)$ be the set of all linear transformations on $\mathbb{C}^{1|2n}$ expressed as matrices with respect to a standard basis $\{v_{0}; v_{1}, \ldots, v_{2n} \}$ of $\mathbb{C}^{1|2n}$.
	Elements of $\mathfrak{gl}(1|2n)$ are then block matrices $\begin{bmatrix} \alpha & r \\ c & A \end{bmatrix}$: Here, $\alpha$ is a scalar, $r$ is a row vector, $c$ is a column vector, and $A$ is a $2n\times2n$ square matrix. The Lie super bracket on $\mathfrak{gl}(1|2n)$ is the super commutator: 
    \begin{equation*}
	    [X,Y] = XY - (-1)^{|X||Y|}YX, \text{ for all } X,Y \in \mathfrak{gl}(1|2n).\label{supercommutator}
	\end{equation*}
    The orthosymplectic Lie superalgebra $\mathfrak{osp}(1|2n)$ is the Lie subsuperalgebra of $\mathfrak{gl}(1|2n)$ in which members subscribe to the conditions 
	$\alpha = 0$, $r = (r_{1}, \ldots, r_{n}, r_{n+1}, \ldots, r_{2n})$, $c^{\intercal} = (r_{n+1}, \ldots, r_{2n}, -r_{1}, \ldots, -r_{n})$, and $A$ is an element of the symplectic Lie algebra $\mathfrak{sp}(2n)$. 
    
    The description above yields the (faithful) standard representation of $\mathfrak{osp}(1|2n)$. 
    We will write $T_{\beta}$ for the image in the standard representation of the vector $X_{\beta}$. In long form,

\begin{equation*}
	    T_{\beta} =
	    \begin{bmatrix} 0 & r \\ c & A \end{bmatrix},
	    \end{equation*}
	   where
    \begin{align*}
        r &= (0,0),\quad A =
        \begin{bmatrix} 0 & 0\\ E_{ij} + E_{ji} & 0 \end{bmatrix}, \text{ for } \beta = -\delta_{i} - \delta_{j}\\
	    r &= (0,0),\quad A = \begin{bmatrix} 0 & 0\\ E_{jj} & 0 \end{bmatrix}, \text{ for } \beta = -2\delta_{j}\\
        r &= (0,0),\quad A = \begin{bmatrix} 0 & E_{ij} + E_{ji}\\ 0 & 0 \end{bmatrix}, \text{ for } \beta = \delta_{i} + \delta_{j}\\
	    r &= (0,0),\quad A = \begin{bmatrix} 0 & E_{jj}\\ 0 & 0 \end{bmatrix}, \text{ for } \beta = 2\delta_{j}\\
	    r &= (0,0),\quad A = \begin{bmatrix} E_{ij} & 0\\ 0 & -E_{ji} \end{bmatrix}, \text{ for }
	    \beta = \delta_{i} - \delta_{j}\\
	    r &= (\delta_{j},0) ,\quad A = \begin{bmatrix} 0 & 0\\ 0 & 0 \end{bmatrix}, \text{ for } \beta = -\delta_{j}\\
	    r &= (0,\delta_{j}) ,\quad A = \begin{bmatrix} 0 & 0\\ 0 & 0 \end{bmatrix}, \text{ for } \beta = \delta_{j};
	\end{align*}
and,
\begin{align*}
	    H_{2\delta_{j}} \leftrightarrow 
	    \begin{bmatrix} 0 & r \\ c & A \end{bmatrix},
	    \end{align*}
where
	    \begin{align*}
	  r = (0,0),\quad A = \begin{bmatrix} E_{jj} & 0\\ 0 & -E_{jj} \end{bmatrix}.
	    \end{align*}
Note that we will furthermore write $T_{\pm i}$ for $T_{\pm \delta_{i}}$.

    \subsubsection{The adjoint representation}

    The even part $\mathfrak{osp}(1|2n)_{\bar{0}}$ of $\mathfrak{osp}(1|2n)$ is a Lie algebra isomorphic to $\mathfrak{sp}(2n)$, and the odd part  $\mathfrak{osp}(1|2n)_{\bar{1}}$ is isomorphic to the natural $\mathfrak{sp}(2n)$-module $\mathbb{C}^{2n}$. Choosing a Cartan subalgebra $\mathfrak{h}$ of $\mathfrak{osp}(1|2n)$ is precisely a matter of choosing a Cartan subalgebra $\mathfrak{h}$ of $\mathfrak{osp}(1|2)_{\bar{0}}$, which we take to be $\sum\limits_{i=1}^{n} \mathbb{C}H_{2\delta_{j}}$. Then $\Ad$ is the usual adjoint representation for the action of the Cartan subalgebra on $\mathfrak{osp}(1|2n)$. Roots $\beta$ are defined accordingly. Denote a root vector of $\beta$ by $X_{\beta}$. Then $Ad(H_{2\delta_{j}})(X_{\beta}) = \beta_{j}X$ for all $j$. The set given in \eqref{osp-basis-sym} is a basis of root vectors (along with Cartan elements) and identifies a complete set of roots of $\mathfrak{osp}(1|2n)$. Note that each root space $\mathbb{C}X_{\beta}$ is $1$-dimensional and $[X_{\alpha}, X_{\beta}] \in \mathbb{C}X_{\alpha + \beta}$, where $\mathbb{C}X_{\alpha + \beta} = \{0\}$ whenever $\alpha + \beta$ is not a root. 
     
    \subsubsection{Parity of roots of $\mathfrak{osp}(1|2n)$}\label{subsubsec:Roots}
    The nonreduced $BC_n$ root system $\Phi$ of $\mathfrak{osp}(1|2n)$ is given by the union of even (bosonic) roots $\Phi_{0} = \{\pm2\delta_{j}, \pm\delta_{i}\pm\delta_{j}, \delta_{i} - \delta_{j} \mid 1 \leq i \neq j \leq n \}$ with odd (fermionic) roots $\Phi_{1} = \{\pm\delta_{j} \mid 1 \leq j \leq n\}$. Moreover, we have the following set of positive roots $\Phi^{+} = \{ -\delta_{\ell}, -2\delta_{\ell}, \delta_{i} - \delta_{j}, -\delta_{i} - \delta_{j} \mid 1 \leq \ell \leq n, 1 \leq i < j \leq n\}$ associated to the non-standard choice of base $\Pi = \{ -\delta_{1}, \delta_{j} - \delta_{j+1} \mid 1 \leq  j < n \}$ for $\Phi$. The association: Positive roots are roots which are positive integral sums (with more than one summand) of elements of $\Pi$ or are they themselves elements of $\Pi$, the simple roots. The negative roots $\Phi^{-}$ are equal to $-\Phi^{+}$. Positive even (respectively, odd) roots, as well as their negative counterparts, are defined with the appropriate intersection with $\Phi_{0}$ (respectively, $\Phi_{1}$). A root vector is exclusively positive even, positive odd, negative even, or negative odd.

    In particular, $\mathfrak{osp}(1|2n)$ has a triangular decomposition: 
    \[\mathfrak{osp}(1|2n) = \mathfrak{n}_{-} \oplus \mathfrak{h} \oplus \mathfrak{n}_{+}, \text{ with } \mathfrak{n}_{\mp} = \sum_{1 \leq i \neq j \leq n}\mathbb{C}X_{\pm 2\delta_{j}} \oplus \mathbb{C}X_{\pm \delta_{j}} \oplus \mathbb{C}X_{\pm\delta_{i} \pm \delta_{j}} \oplus \mathbb{C}X_{\mp\delta_{i} \pm \delta_{j}}.\]  

    \subsubsection{Oscillator representation}
 
 In \cite{mussonLieSuperalgebrasAssociated1999}, we see that the universal enveloping algebra $U(\mathfrak{osp}(1|2n))$ of $\mathfrak{osp}(1|2n)$ surjects onto the $n$th Weyl superalgebra $A_{n}$, the algebra of polynomial differential operators. In particular, $A_{n}$ is generated as a superalgebra by the operators 
	\begin{align} 
	x_{j}\colon& \mathbb{C}[\mathbf{x}] \rightarrow \mathbb{C}[\mathbf{x}],\thinspace f \mapsto x_{j}f \label{op:commx}\\ 
	\partial_{x_{j}}\colon& \mathbb{C}[\mathbf{x}] \rightarrow \mathbb{C}[\mathbf{x}],\thinspace f \mapsto \frac{\partial f}{\partial x_{j}} \label{op:commdelx}.
	\end{align}
	The defining relation is the commutator $\partial_{x_{j}}x_{j} - x_{j}\partial_{x_{j}} = 1_{A_{n}}$, and we get a consistent $\mathbb{Z}_{2}$-grading from the following integral grading on $A_{n}$. Let $\deg{(x_{j})} = 1$,
	 $\deg{(\partial_{x_{j}})} = -1$, and extend multiplicatively such that the degree of the monomial $x_{i_{1}}x_{i_{2}}\cdots x_{i_{k}} \partial_{j_{1}}\partial_{j_{2}}\cdots \partial_{j_{l}}$ is \[\deg{(x_{i_{1}}x_{i_{2}}\cdots x_{i_{k}} \partial_{j_{1}}\partial_{j_{2}}\cdots \partial_{j_{l}}}) = k - l.\] 
	Now a superalgebra homomorphism $\phi\!:\U(\mathfrak{osp}(1|2n)) \rightarrow A_{n}$ is induced from the universal nature of $\U(\mathfrak{osp}(1|2))$ and the assignment on $\mathfrak{osp}(1|2)$ below:
	\begin{align}
	    X_{-\delta_{i} - \delta_{j}}  &\mapsto -\partial_{x_{i}}\partial_{x_{j}}, \quad i \neq j\\
	    X_{-2\delta_{j}} &\mapsto -\frac{1}{2}\partial_{x_{j}}^{2}\\
	    X_{\delta_{i} + \delta_{j}} &\mapsto x_{i}x_{j}, \quad i \neq j\\
	    X_{2\delta_{j}} &\mapsto \frac{1}{2}x_{j}^{2}\\
	    X_{\delta_{i} - \delta_{j}} &\mapsto x_{i}\partial_{x_{j}}, \quad i \neq j\\
	    X_{-\delta_{j}} &\mapsto \frac{1}{\sqrt{2}}\partial_{x_{j}}\\
	    X_{\delta_{j}} &\mapsto \frac{1}{\sqrt{2}}x_{j}\\
	    H_{2\delta_{j}} &\mapsto x_{i}\partial_{x_{i}} + \frac{1}{2}.
	\end{align}
Consequently, the super vector space of polynomials $\mathbb{C}[\mathbf{x}]$, with the parity of $\mathbf{x}^{\mathbf{k}}$ given by $\deg(\mathbf{x}^{\mathbf{k}}) \mod{2}$, is an oscillator representation of $\mathfrak{osp}(1|2n)$ via the map $\phi$.
	
Note that we will abbreviate $\partial_{x_{j}}$ by $\partial_{j}$. 
	 
\section{Decomposing \texorpdfstring{$\mathbb{C}[\mathbf{x}] \otimes \mathbb{C}^{1|2n}$}{polynomials tensor standard representation}}\label{sec:DecomposingTPR}	

Unless otherwise stated, the use of $\mathfrak{g}$ is to mean the Lie superalgebra $\mathfrak{osp}(1|2n)$. From here on we will omit the word super when it is not essential.

\subsection{The space of singular vectors in \texorpdfstring{$\mathbb{C}[\mathbf{x}] \otimes \mathbb{C}^{1|2n}$}{polynomials tensor standard representation}}\label{sec:space-singular-vectors}

We define a \emph{$\mathfrak{n}_{+}$-singular vector} $v$ in a highest-weight $\mathfrak{g}$-module $V$ to be a solution to the system of equations $ev = 0$, with $e \in \mathfrak{n}_{+}$, and let $V_{\mathfrak{n}_{+}}^{+} \subset V$ denote the subspace of $\mathfrak{n}_{+}$-singular vectors. The descriptor/prefix/subscript $\mathfrak{n}_{+}$ will be suppressed in most cases.
 
	With the choice of root base $\Pi$ and Cartan subalgebra $\mathfrak{h}$ (hence the choice of a Borel subalgebra) made in Section \ref{sec:Background}, the $\mathfrak{g}$-module $\mathbb{C}[\mathbf{x}]$ is a highest-weight module with the polynomial $1$ as a highest-weight vector of weight $\wt(1) = \frac{1}{2}\nu_{n}$. Similarly, the $\mathfrak{g}$-module $\mathbb{C}^{1|2n}$ is a highest-weight module with $v_{2n}$ as a highest-weight vector of weight $\wt(v_{2n}) =  -\delta_{n}$. 

	\setlength{\tabcolsep}{2pt}
\begin{figure}[H]	
\centering
	\begin{tabular}{c|c|c}
	 $\mathbb{C}[\mathbf{x}]$  & $\mathbb{C}^{1|2n}$ & $\mathbb{C}[\mathbf{x}] \otimes \mathbb{C}^{1|2n}$  \\ \hline
	 $\wt(\mathbf{x}^{\mathbf{k}}) = \mathbf{k} + \frac{1}{2}\nu_{n} \thinspace$ &
	 $\thinspace \wt(v_{j}) = \begin{cases}  \phantom{\hspace{0.13cm} -} 0,& \hfill j = 0\\ \phantom{-}    \delta_{j},&  \hfill 1\leq j \leq n\\ -\delta_{j},&  \hfill n+1 \leq j \leq 2n  \end{cases}$
& $\thinspace \wt(f \otimes v_{j}) = \wt(f) + \wt(v_{j})$
	\end{tabular}
         \caption{Weights of weight vectors in relevant $\mathfrak{g}$-modules}%
\label{fig:table1}
\end{figure}
    
	Consequently, $1 \otimes v_{2n}$ of weight $\frac{1}{2}\nu_{n} - \delta_{n}$ is a singular vector in 
	\[ \mathbb{C}[\mathbf{x}] \otimes \mathbb{C}^{1|2n} = \bigoplus^{\text{finite}}_{i} U(\mathfrak{g})w_{i} = \bigoplus^{\text{finite}}_{i} U(\mathfrak{n}_{-})w_{i}, \]
where, letting $\{w_{i}\}$ be a basis of $V^{+}$, each $U(\mathfrak{n}_{-})w_{i}$ is a unique irreducible $\mathfrak{g}$-submodule of $\mathbb{C}[\mathbf{x}] \otimes \mathbb{C}^{1|2n}$ generated by the singular vector $w_{i}$.

\begin{Remark}[Dimension of space of singular vectors]\label{remark:singular-vectors}
We now claim the following vectors in $\mathbb{C}[\mathbf{x}]\otimes\mathbb{C}^{1|2n}$ span the space annihilated by the odd positive roots vectors in $\mathfrak{g}$:
	\begin{align*}
1 \otimes v_{n+j}, \quad 1 \otimes v_{0} + \sqrt{2} \sum\limits_{j=1}^{n} x_{j} \otimes v_{n+j}, \quad -\sqrt{2} x_{j} \otimes v_{0} + 1 \otimes v_{j} + x_{j}^{2} \otimes v_{n+j},
	\end{align*}
	with $1 \leq j \leq n$.
On the other hand, $-\sqrt{2} x_{j} \otimes v_{0} + 1 \otimes v_{j} + x_{j}^{2} \otimes v_{n+j}$ is not annihilated by the positive even root vectors in $\mathfrak{g}$.

Since the result is not new, but a minor correction to a published computation, we leave the related justification to the appendix as a proof of \Cref{prop:space-of-singular-vectors}. The main takeaway is that  
\begin{equation*}
\dim{V_{n}^{+}} = \begin{cases}
2, &\hfill n > 1\\
3, &\hfill n = 1.
\end{cases}
\end{equation*}
\end{Remark}

\subsection{The \texorpdfstring{$n>1$}{n greater than one} case}
We now fix $n \in \mathbb{Z}_{>1}$ and let $V = \mathbb{C}[\mathbf{x}] \otimes \mathbb{C}^{1|2n}$. We define $V^{\text{lil}} = \mathbb{C}[\mathbf{x}] \otimes \mathbb{C}v_{0}$ and  $V^{\text{big}} = \mathbb{C}[\mathbf{x}] \otimes (\underset{i > 0}{\oplus} \mathbb{C}v_{i})$. Note that $V = V^{\text{lil}} \oplus V^{\text{big}}$ is a subspace decomposition of $V$ since $V_{\bar{i}} \supset V^{\text{lil}}_{\bar{i}} = \mathbb{C}[\mathbf{x}]_{\bar{i}} \otimes \mathbb{C}v_{0}$ and $V_{\bar{i}} \supset V^{\text{big}}_{\bar{i}} = \mathbb{C}[\mathbf{x}]_{\bar{i}+\bar{1}} \otimes (\underset{i > 0}{\oplus} \mathbb{C}v_{})$ for $\bar{i} \in \mathbb{Z}_{2}$.
Unfortunately, neither $V^{\text{lil}}$ nor $V^{\text{big}}$ are $\mathfrak{g}$-submodules of $\mathbb{C}[\mathbf{x}] \otimes \mathbb{C}^{1|2n}$; for instance, $X_{-\delta_{1}}(1 \otimes v_{0}) = -1 \otimes v_{n+1} \notin V^{\text{lil}}$ and $X_{-\delta_{1}}(1 \otimes v_{1}) = 1 \otimes v_{0} \notin V^{\text{big}}$. However we do have a weight basis for $V$ which we use in constructing automorphisms of $V$ that will serve as intertwining operators. We make the following conventions to express elementary tensors in $V$: 
$Y_{\mathbf{k}} = \mathbf{x}^{\mathbf{k}} \otimes v_{0}$ and $Z_{\mathbf{k},i} = \mathbf{x}^{\mathbf{k}} \otimes v_{i}$.

\subsubsection{Weight basis of \texorpdfstring{$V$}{V}} 
Let $\Lambda$ be the set of weights of $V$. We recall that weights of tensor products of weight modules are sums of weights of the tensor product factors. Thus $\lambda \in \Lambda$ is a sum of the form $\mathbf{k} +1/2\nu_{n} + d$ for $\mathbf{k} \in \mathbb{Z}^{n}_{\geq 0}$ and $d \in \{-\delta_{j},\mathbf{0},\delta_{j} \mid 1 \leq j \leq n\}$. Noting that $ \mathbf{k} + \mathbf{0}$ and $\mathbf{k} + \delta_{j}$ are elements of $\mathbb{Z}_{\geq 0}^{n}$ as much as $\mathbf{k}$ is, we have that $\lambda \in \Lambda$ is a sum of the form $\mathbf{k} +1/2\nu_{n} - \delta_{j}$ for some $\mathbf{k} \in \mathbb{Z}_{\geq 0}$ and $1 \leq j \leq n$.  Additionally, it will be helpful to part $\Lambda$ by those $\lambda = (\lambda_{1},\lambda_{2}, \ldots, \lambda_{n})$ satisfying $\lambda_{i} = -\frac{1}{2}$, for some $i$, and the remaining weights. In the latter case, each weight is associated with a nonnegative integer $C = C(\lambda)$ which counts the components $\lambda_{i_{1}}, \lambda_{i_{2}}, \ldots, \lambda_{i_{C}}$, if any, not equal to $\frac{1}{2}$. Denote by $\Lambda_{C}$ the set of weights with precisely $C$ non-one-half components (assuming each component is greater than 0) and by $\Lambda^{(j)}$ the set of weights of the form $\mathbf{0} +\frac{1}{2}\nu_{n} - \delta_{j}$, for which $\lambda_{j} = -\frac{1}{2}$. Then $\{\Lambda^{(j)}, \Lambda_{C} \mid 1 \leq j \leq n, \thickspace 0 \leq C \leq 2n+1 \}$ is a partition of $\Lambda$.  

Since $\lambda \in \Lambda_{C}$ implies $\mathbf{k} - \delta_{j} \in \mathbb{Z}^{n}_{\geq 0}$, we can write $\lambda = \mathbf{k} + \frac{1}{2}\nu_{n}$. In other words, $\mathbf{k} = \mathbf{0}$ for $\lambda \in \Lambda^{(j)}$; otherwise, $\mathbf{k} = \lambda - \frac{1}{2}\nu_{n}$.
Furthermore, the $\lambda$-weight space $V_{\lambda}$ can be assigned a basis $\mathcal{B}_{\lambda}$:
\[\mathcal{B}_{\lambda} = \mathcal{B}_{(\lambda_{1},\lambda_{2},\ldots,\lambda_{n})} = 
\begin{cases} 
    \{Z_{\mathbf{0},n+j}\}, &  \lambda \in \Lambda^{(j)}\\ 
    \{ Y_{\mathbf{0}}, Z_{\delta_{j},n+j} \mid 1 \leq j \leq n \}, & \lambda \in \Lambda_{0}\\
    \{ Y_{\mathbf{k}}, Z_{\mathbf{k}+\delta_{j},n+j}, Z_{\mathbf{k}-\delta_{i_{l}},i_{l}}\mid 1 \leq j \leq n, \thinspace 1 \leq l \leq C \}, & \lambda \in \Lambda_{C \neq 0} \text{ with } \lambda_{i_{l}} \neq \frac{1}{2}.\\
\end{cases}
\]
By dimension:
\[\dim(\mathcal{B}_{\lambda}) = 
\begin{cases}
    1, &  \lambda \in \Lambda^{(j)}\\ 
    C+n+1, & \lambda \in \Lambda_{C}.\\
\end{cases}
\]
Then bases for $V^{\text{lil}}$ and $V^{\text{big}}$ are given by the $Y_{\mathbf{k}}$ and $Z_{\mathbf{k},i}$ (including $Z_{\mathbf{0},n+j}$), respectively:
\[V = V^{\text{lil}} \oplus V^{\text{big}} = \big(\bigoplus_{\mathbf{k}\in \mathbb{Z}_{\geq 0}^{n}}\mathbb{C}Y_{\mathbf{k}}\big) \oplus \big(\bigoplus_{j=1}^{n} \mathbb{C}Z_{\mathbf{0},n+j} \bigoplus_{\mathbf{k}\in \mathbb{Z}_{>0}^{n}}\mathbb{C}Z_{\mathbf{k},i}\big).\]
The $Y$-basis and the $Z$-basis are indeed the appropriate subsets of the standard basis of $V$ comprising elementary tensors.

We recall \Cref{remark:singular-vectors} and the preceding sentence: The singular vector $w_{1} = 1 \otimes v_{2n}$ is of weight $\mathbf{0} + \frac{1}{2}\nu_{n} - \delta_{n} = \lambda_{w_{1}} \in \Lambda^{(n)}$, and the singular vector $w_{2} = 1 \otimes v_{0} + \sqrt{2}\sum\limits_{i=1}^{n}x_{i} \otimes v_{n+i}$ is of weight $\mathbf{0} + \frac{1}{2}\nu_{n} = \lambda_{w_{2}} \in \Lambda_{0}$. So $U(\mathfrak{n}_{-})w_{2}$, being an irreducible highest-weight module, is isomorphic to $\mathbb{C}[x_{1},x_{2},\ldots,x_{n}]$. Now $V^{\text{lil}}$ is isomorphic to $U(\mathfrak{n}_{-})w_{2}$ as super vector spaces. We are motivated to find an isomorphism that commutes with the action of the generators of $\mathfrak{g}$ in $U(\mathfrak{n}_{-})w_{2}$, that is, an intertwining automorphism. In particular, we seek to describe a $\mathfrak{g}$-action on $V^{\text{lil}}$ with respect to the $Y$-basis and a similar $\mathfrak{g}$-action on $V^{\text{big}}$ with respect to the $Z$-basis. 

\subsubsection{Intertwiners}

If $\Gamma$ is an intertwining automorphism as described above, then $\Gamma$ should preserve weight spaces. In particular, $X_{-\delta_{j}}\Gamma(\alpha_{\mathbf{0}}Y_{\mathbf{0}}) = X_{-\delta_{j}}(w_{2}) = 0$ for $1 \leq j \leq n$, $\alpha_{\mathbf{0}} \in \mathbb{C}$. More generally, $X_{-\delta_{j}}\Gamma(\alpha_{\mathbf{k}}Y_{\mathbf{k}}) \in \mathbb{C}X_{-\delta_{j}}X_{\delta_{\mathbf{k}}}(w_{2})$, where $X_{\delta_{\mathbf{k}}} \in U(\mathfrak{g})$ is equal to $X_{\delta_{k_{1}}}X_{\delta_{k_{2}}} \cdots X_{\delta_{k_{n}}}$ (or any permutation of the factors), letting $X_{\delta_{0}} = 1$. The previous line is justified by an analysis of the weights: $\wt\left(X_{\delta_{\mathbf{k}}}(w_{2})\right) = \delta_{k_{1}} + \delta_{k_{2}} + \cdots + \delta_{k_{n}} + \wt(w_{2}) = \wt\left(\sigma\cdot X_{\delta_{k_{1}}}X_{\delta_{k_{2}}} \cdots X_{\delta_{k_{n}}}(w_{2})\right)$ for $\sigma \in \mathfrak{S}_{n}$, the permutation group acting on the indices of $\mathbf{k}$. Then consider 
\begin{equation*}
  \Gamma = \mathbbm{1} \otimes \mathbbm{1} + \text{sum of elementary tensors}, \label{Ansatz}\tag{Ansatz} 
\end{equation*} 
where the elementary tensors are formed by pairs of lowering and raising operators based on comparing the right-hand side of the following lines:

\begin{align*}   
    X_{-\delta_{r}}X_{\delta_{q}}X_{\delta_{p}}(w_{2}) & = X_{-\delta_{r}}X_{\delta_{q}}\left(\frac{1}{\sqrt{2}} m_{p} \otimes \mathbbm{1} + \mathbbm{1} \otimes T_{p} \right ) (1 \otimes v_{0} + \sqrt{2}\sum\limits_{\ell=1}^{n}x_{\ell} \otimes v_{n+\ell}) \\ 
    & = X_{-\delta_{r}}\left(\frac{1}{\sqrt{2}} m_{q} \otimes \mathbbm{1} + \mathbbm{1} \otimes T_{q} \right )(-\frac{1}{\sqrt{2}} x_{p} \otimes v_{0} + \sum\limits_{\ell=1}^{n}x_{p}x_{\ell} \otimes v_{n+\ell} + 1 \otimes v_{p})\\ 
    & = \left(\frac{1}{\sqrt{2}} \partial_{r} \otimes \mathbbm{1} + \mathbbm{1} \otimes T_{-r}\right)(\frac{1}{2}x_{q}x_{p} \otimes v_{0} + \frac{1}{\sqrt{2}}\sum\limits_{\ell=1}^{n}x_{q}x_{p}x_{\ell} \otimes v_{n+\ell}\\
    & \quad + \frac{1}{\sqrt{2}}x_{p} \otimes v_{q} + \frac{1}{\sqrt{2}}x_{q} \otimes v_{p})\\
     & = \frac{1}{2\sqrt{2}}(\delta_{rq}x_{p} + \delta_{rp}x_{q}) \otimes v_{0} - \frac{1}{2} x_{q}x_{p} \otimes v_{n+r}\\ 
     & \quad + \frac{1}{2}\sum\limits_{\ell=1}^{n}(\delta_{rq}x_{p}x_{\ell} + \delta_{rp}x_{q}x_{\ell}+\delta_{r\ell}x_{q}x_{p}) \otimes v_{n+\ell} + \frac{1}{2}\delta_{rq} \otimes v_{p} + \frac{1}{2}\delta_{rp} \otimes v_{q} \\ 
    &= \frac{1}{2\sqrt{2}}(\delta_{rq}x_{p} + \delta_{rp}x_{q}) \otimes v_{0} + \frac{1}{2}\sum\limits_{\ell=1}^{n}(\delta_{rq}x_{p}x_{\ell} + \delta_{rp}x_{q}x_{\ell}) \otimes v_{n+\ell}\\
    & \quad + \frac{1}{2}\delta_{rq} \otimes v_{p} + \frac{1}{2}\delta_{rp} \otimes v_{q} \\ 
    \intertext{with}
   X_{-\delta_{r}}(\alpha_{\delta_{q} + \delta_{p}}x_{q}x_{p} \otimes v_{0}) &= \left(\frac{1}{\sqrt{2}} \partial_{r} \otimes \mathbbm{1} + \mathbbm{1} \otimes T_{-r} \right)(\alpha_{\delta_{q} + \delta_{p}}x_{q}x_{p} \otimes v_{0})\\
   & = \frac{\alpha_{\delta_{q}+\delta_{p}}}{\sqrt{2}}(\delta_{rq}x_{p} + \delta_{rp}x_{q}) \otimes v_{0} - \alpha_{\delta{q}+\delta_{p}}x_{q}x_{p} \otimes v_{n+r}.
\end{align*}

A similar investigation of weight spaces with regards to the the $Z$-basis leads to the following morphisms of $V$:
\begin{align}
\prescript{w_{1}}{}{\Gamma} &= \mathbbm{1} \otimes \mathbbm{1} - \sqrt{2}\sum\limits_{i=1}^{n} \partial_{x_{i}} \otimes T_{i} + \sqrt{2}\sum\limits_{i=1}^{n} x_{i} \otimes T_{-i} \label{auto-Z}\\
\prescript{w_{2}}{}{\Gamma} &= \mathbbm{1} \otimes \mathbbm{1} + \sqrt{2}\sum\limits_{i=1}^{n} \partial_{x_{i}} \otimes T_{i} - \sqrt{2}\sum\limits_{i=1}^{n} x_{i} \otimes T_{-i} \label{auto-Y}.
\end{align}

\subsubsection{Automorphisms of \texorpdfstring{$V$}{V}}

We show that \Cref{auto-Y,auto-Z} give automorphisms of $V$ by considering their restrictions to each $\lambda$-weight space $V_{\lambda}$. Label the corresponding (square) matrices of the restrictions with respect to $\mathcal{B}_{\lambda}$ by $\prescript{w_{1}}{}{\Gamma}_{\lambda}$ and $\prescript{w_{2}}{}{\Gamma}_{\lambda}$. Note the \eqref{Ansatz} above preserves weight spaces by the pairing of lowering and raising operators in elementary tensors. In particular, $\prescript{w_{1}}{}{\Gamma}$ and $\prescript{w_{2}}{}{\Gamma}$ compute with the image of the Cartan subalgebra under the tensor product representation $V$. It suffices to show $\prescript{w_{1}}{}{\Gamma}_{\lambda}$ and $\prescript{w_{2}}{}{\Gamma}_{\lambda}$ are nonsingular in order to establish \eqref{auto-Z} and \eqref{auto-Y} give automorphisms. 

\begin{Remark}[Notation for the matrices] 
In the matrices below, we let dotted segments denote constant entries between equally valued entries and dotted polygons signify a region of constant entries of the same value as vertices. The entry $a$ equals $(-1)^{|\mathbf{k}|}\sqrt{2}$, recalling $\mathbf{k} = \lambda - \frac{1}{2}\nu_{n}$ (for relevant $\lambda$).
\end{Remark}

\begin{align*}
\prescript{w_{1}}{}{\Gamma}_{\lambda} &= 
\begin{dcases}
\begin{tikzpicture}[baseline=-\the\dimexpr\fontdimen22\textfont2
\relax ]
 \tikzset{BarreStyle/.style =   {opacity=.6,line width=0.5 mm,line cap=round,color=#1}}
\matrix[matrix of math nodes,left delimiter = (,right delimiter = ),row sep=10pt,column sep = 10pt, ampersand replacement=\&] (m) {
1\\
};
\end{tikzpicture}, & \lambda \in \Lambda^{(j)}\\
\begin{tikzpicture}[baseline=-\the\dimexpr\fontdimen22\textfont2
\relax ]
 \tikzset{BarreStyle/.style =   {opacity=.6,line width=0.5 mm,line cap=round,color=#1}}
\matrix[matrix of math nodes,left delimiter = (,right delimiter = ),row sep=10pt,column sep = 10pt, ampersand replacement=\&] (m) {
1 \& a \& \& \& a\\
a \& \&0  \& \&0\\
\& 0\& \& \&  \\
\& \& \& \& 0\\
a \&0 \& \& 0\& 1\\
};
\draw[loosely dotted, thick] (m-1-1)-- (m-5-5); 
\draw[loosely dotted, thick] (m-1-2)-- (m-1-5); 
\draw[loosely dotted, thick] (m-2-1)-- (m-5-1); 
\draw[loosely dotted, thick] (m-2-3)-- (m-2-5); 
\draw[loosely dotted, thick] (m-2-3)-- (m-4-5); 
\draw[loosely dotted, thick] (m-2-5)-- (m-4-5); 
\draw[loosely dotted, thick] (m-3-2)-- (m-5-4); 
\draw[loosely dotted, thick] (m-3-2)-- (m-5-2); 
\draw[loosely dotted, thick] (m-5-2)-- (m-5-4); 
\end{tikzpicture}, & \lambda \in \Lambda_{0}\\
\begin{tikzpicture}[baseline=-\the\dimexpr\fontdimen22\textfont2
\relax, decoration=brace]
 \tikzset{BarreStyle/.style =   {opacity=.6,line width=0.5 mm,line cap=round,color=#1}}
\matrix[matrix of math nodes,left delimiter = (,right delimiter = ),row sep=10pt,column sep = 10pt, ampersand replacement=\&] (m) {
1 \& -a \& \& -a \& a \& \& a\\
-a \&   \& 0\&   \&    \&  \&0 \\
\& 0\& \& \& \& \& \\
-a \&  \& \&   \&  \&  \&\\
a \&   \& \&   \&    \&  \& \\
\&\&\&\&\&\&0\\
a \& 0 \& \&   \&  \&  0\&1\\
};
\draw[loosely dotted, thick] (m-1-1)-- (m-7-7); 
\draw[loosely dotted, thick] (m-1-2)-- (m-1-4); 
\draw[loosely dotted, thick] (m-1-5)-- (m-1-7); 
\draw[loosely dotted, thick] (m-2-1)-- (m-4-1); 
\draw[loosely dotted, thick] (m-5-1)-- (m-7-1); 
\draw[loosely dotted, thick] (m-2-3)-- (m-2-7); 
\draw[loosely dotted, thick] (m-2-3)-- (m-6-7); 
\draw[loosely dotted, thick] (m-2-7)-- (m-6-7); 
\draw[loosely dotted, thick] (m-3-2)-- (m-7-6); 
\draw[loosely dotted, thick] (m-3-2)-- (m-7-2); 
\draw[loosely dotted, thick] (m-7-2)-- (m-7-6); 
\draw[decorate,transform canvas={yshift=-.1em},thick] (m-1-5.north east) -- node[above=2pt] {$C$-many} (m-1-7.north west); 
\end{tikzpicture}, & \lambda \in \Lambda_{C \neq 0} 
\end{dcases}
\end{align*}
\begin{align*}
\prescript{w_{2}}{}{\Gamma}_{\lambda} &= 
\begin{dcases}
\begin{tikzpicture}[baseline=-\the\dimexpr\fontdimen22\textfont2
\relax ]
 \tikzset{BarreStyle/.style =   {opacity=.6,line width=0.5 mm,line cap=round,color=#1}}
\matrix[matrix of math nodes,left delimiter = (,right delimiter = ),row sep=10pt,column sep = 10pt, ampersand replacement=\&] (m) {
1\\
};
\end{tikzpicture}, & \lambda \in \Lambda^{(j)}\\
\begin{tikzpicture}[baseline=-\the\dimexpr\fontdimen22\textfont2
\relax ]
 \tikzset{BarreStyle/.style =   {opacity=.6,line width=0.5 mm,line cap=round,color=#1}}
\matrix[matrix of math nodes,left delimiter = (,right delimiter = ),row sep=10pt,column sep = 10pt, ampersand replacement=\&] (m) {
1 \& a \& \& \& a\\
a \& \&0  \& \&0\\
\& 0\& \& \&  \\
\& \& \& \& 0\\
a \&0 \& \& 0\& 1\\
};
\draw[loosely dotted, thick] (m-1-1)-- (m-5-5); 
\draw[loosely dotted, thick] (m-1-2)-- (m-1-5); 
\draw[loosely dotted, thick] (m-2-1)-- (m-5-1); 
\draw[loosely dotted, thick] (m-2-3)-- (m-2-5); 
\draw[loosely dotted, thick] (m-2-3)-- (m-4-5); 
\draw[loosely dotted, thick] (m-2-5)-- (m-4-5); 
\draw[loosely dotted, thick] (m-3-2)-- (m-5-4); 
\draw[loosely dotted, thick] (m-3-2)-- (m-5-2); 
\draw[loosely dotted, thick] (m-5-2)-- (m-5-4); 
\end{tikzpicture}, & \lambda \in \Lambda_{0}\\
\begin{tikzpicture}[baseline=-\the\dimexpr\fontdimen22\textfont2
\relax, decoration=brace]
 \tikzset{BarreStyle/.style =   {opacity=.6,line width=0.5 mm,line cap=round,color=#1}}
\matrix[matrix of math nodes,left delimiter = (,right delimiter = ),row sep=10pt,column sep = 10pt, ampersand replacement=\&] (m) {
1 \& a \& \& a \& -a \& \& -a\\
a \&   \& 0\&   \&    \&  \&0 \\
\& 0\& \& \& \& \& \\
a \&  \& \&   \&  \&  \&\\
-a \&   \& \&   \&    \&  \& \\
\&\&\&\&\&\&0\\
-a \& 0 \& \&   \&  \&  0\&1\\
};
\draw[loosely dotted, thick] (m-1-1)-- (m-7-7); 
\draw[loosely dotted, thick] (m-1-2)-- (m-1-4); 
\draw[loosely dotted, thick] (m-1-5)-- (m-1-7); 
\draw[loosely dotted, thick] (m-2-1)-- (m-4-1); 
\draw[loosely dotted, thick] (m-5-1)-- (m-7-1); 
\draw[loosely dotted, thick] (m-2-3)-- (m-2-7); 
\draw[loosely dotted, thick] (m-2-3)-- (m-6-7); 
\draw[loosely dotted, thick] (m-2-7)-- (m-6-7); 
\draw[loosely dotted, thick] (m-3-2)-- (m-7-6); 
\draw[loosely dotted, thick] (m-3-2)-- (m-7-2); 
\draw[loosely dotted, thick] (m-7-2)-- (m-7-6); 
\draw[decorate,transform canvas={yshift=-.1em},thick] (m-1-5.north east) -- node[above=2pt] {$C$-many} (m-1-7.north west);
\end{tikzpicture}, & \lambda \in \Lambda_{C \neq 0}. 
\end{dcases}
\end{align*}

The matrices $\prescript{w_{1}}{}{\Gamma}_{\lambda}, \prescript{w_{2}}{}{\Gamma}_{\lambda}$ equal the $1 \times 1$ identity for $\lambda \in \Lambda^{(j)}$ or, whenever $\lambda \in \Lambda_{C}$, come as particular arrowhead matrices upon which elementary row operations yield lower triangular matrices with diagonal $\diag(1-(n+C)a^{2},1,\ldots,1)$. So the determinant of $\prescript{w_{2}}{}{\Gamma}_{\mathbf{k}}$ equals the determinant of $\prescript{w_{1}}{}{\Gamma}_{\lambda}$, which is either $1$ or $1-(n+C)a^{2} \neq 0$, as $a$ takes values $\pm\sqrt{2}$. Thus $\prescript{w_{2}}{}{\Gamma}_{\lambda}$ and $\prescript{w_{1}}{}{\Gamma}_{\lambda}$ are automorphisms of the $\lambda$-weight space of $V$. 
As desired, $\prescript{w_{1}}{}{\Gamma} = \sum_{\lambda \in \Lambda} \prescript{w_{1}}{}{\Gamma}_{\lambda}$ and $\prescript{w_{2}}{}{\Gamma} = \sum_{\lambda \in \Lambda} \prescript{w_{2}}{}{\Gamma}_{\lambda}$  are automorphisms of $V$. The operator $\prescript{w_{1}}{}{\Gamma}$ can be thought of as infinite diagonal block matrix with each block one of the $\prescript{w_{1}}{}{\Gamma}_{\lambda}$ and likewise for $\prescript{w_{2}}{}{\Gamma}$ with respect to the $\prescript{w_{2}}{}{\Gamma}_{\lambda}$.

\subsection{Formulas for action of \texorpdfstring{$\mathfrak{osp}(1|2n)$}{osp(1|2n)} on bases of irreducible summands}\label{subsec:biggerthan1}

The prior discussion leaves us with automorphisms of $V$ which restrict to operators on the weight spaces expressed as arrowhead matrices. That is, we have \Cref{thm:basi-and-actions:arrowhead-gam,thm:basi-and-actions:arrowhead-gamt} of \Cref{thm:basis-and-actions}; thus, we are left to prove \Cref{thm:basis-and-actions:gam-image,thm:basis-and-actions:gamt-image} to complete the proof of the theorem. 
 
\begin{proof}[Proof of \Cref{thm:basis-and-actions:gam-image,thm:basis-and-actions:gamt-image} of \Cref{thm:basis-and-actions}]

Recognizing $V = \mathbb{C}[\mathbf{x}] \otimes \mathbb{C}^{1|2n}$ as a tensor product of $\mathfrak{g}$-modules, define the following operators in $\mathfrak{gl}(V)$:

\begin{align*}
\prescript{w_{2}}{}{\Gamma} &= \mathbbm{1} \otimes \mathbbm{1} + \sqrt{2}\sum\limits_{i=1}^{n} \partial_{x_{i}} \otimes T_{i} - \sqrt{2}\sum\limits_{i=1}^{n} x_{i} \otimes T_{-i},\\
\prescript{w_{1}}{}{\Gamma} &= \mathbbm{1} \otimes \mathbbm{1} - \sqrt{2}\sum\limits_{i=1}^{n} \partial_{x_{i}} \otimes T_{i} + \sqrt{2}\sum\limits_{i=1}^{n} x_{i} \otimes T_{-i}.
\end{align*}

We check the following:

For $1 \leq j \leq n$,
    \begin{align} 
	X_{\delta_{j}}\prescript{w_{2}}{}{\Gamma}(Y_{\mathbf{k}}) 
    & = -\frac{1}{\sqrt{2}}Y_{\mathbf{k}+\delta_{j}} + (-1)^{|\mathbf{k}|}Z_{\mathbf{k},j}  
	 + (-1)^{|\mathbf{k}|}\sum\limits_{i=1}^{n} k_{i}Z_{\mathbf{k}-\delta_{i}+\delta_{j},i} \label{computations:1}\\   
	& \enskip + (-1)^{|\mathbf{k}|}\sum\limits_{i=1}^{n}Z_{\mathbf{k}+\delta_{i}+\delta_{j},n+i}, \notag 
\intertext{whereas}
	\prescript{w_{2}}{}{\Gamma}(Y_{\mathbf{k}+\delta_{j}}) 
	& = Y_{\mathbf{k}+\delta_{j}} - (-1)^{|\mathbf{k}|}\sqrt{2}\sum\limits_{i=1}^{n} (k_{i}+\delta_{ij}) Z_{\mathbf{k}-\delta_{i}+\delta_{j},i} \label{computations:2} \\ 
	& \enskip - (-1)^{|\mathbf{k}|}\sqrt{2}\sum\limits_{i=1}^{n} Z_{\mathbf{k}+\delta_{i}+\delta_{j},n+i}. \notag
\intertext{Continuing,}
	X_{-\delta_{j}}\prescript{w_{2}}{}{\Gamma}(Y_{\mathbf{k}}) \label{computation:3}
	& = -\frac{1}{\sqrt{2}} k_{j} Y_{\mathbf{k}-\delta_{j}} - (-1)^{|\mathbf{k}|}Z_{\mathbf{k},n+j}\\ 
	& \enskip + (-1)^{|\mathbf{k}|}\sum\limits_{i=1}^{n} k_{i}(k_{j} - \delta_{i j})Z_{\mathbf{k}-\delta_{i}+\delta_{j},i} \notag \\
	& \enskip + (-1)^{|\mathbf{k}|}\sum\limits_{i=1}^{n} (k_{j} + \delta_{i j})Z_{\mathbf{k}+\delta_{i}-\delta_{j},n+i}, \notag  
\intertext{whereas}
	\prescript{w_{2}}{}{\Gamma}(Y_{\mathbf{k}-\delta_{j}}) 
	& = Y_{\mathbf{k}-\delta_{j}} + (-1)^{|\mathbf{k} - 1|}\sqrt{2}\sum\limits_{i=1}^{n} (k_{i} - \delta_{ij}) Z_{\mathbf{k}-\delta_{i}+\delta_{j},i}\\
	& \enskip + (-1)^{|\mathbf{k} - 1|}\sqrt{2}\sum\limits_{i=1}^{n} Z_{\mathbf{k}+\delta_{i}-\delta_{j},n+i}. \notag
\intertext{Now for $1 \leq i,j \leq n$,}
	X_{\delta_{j}}\prescript{w_{1}}{}{\Gamma}(Z_{\mathbf{k},i}) 
        & = \frac{1}{\sqrt{2}}Z_{\mathbf{k}+\delta_{j},i} + (-1)^{|\mathbf{k}|}Y_{\mathbf{k}+\delta_{i}+\delta_{j}} - \sqrt{2} Z_{\mathbf{k}+\delta_{i},j},
\intertext{whereas}
	\prescript{w_{1}}{}{\Gamma}(Z_{\mathbf{k}+\delta_{j},i} - Z_{\mathbf{k}+\delta_{i},j}) 
	& = Z_{\mathbf{k}+\delta_{j},i} - (-1)^{|\mathbf{k}|}\sqrt{2} Y_{\mathbf{k}+\delta_{i}+\delta_{j}} - Z_{\mathbf{k}+\delta_{i},j} \\
 & \enskip + (-1)^{|\mathbf{k}|}\sqrt{2}Y_{\mathbf{k}+\delta_{i}+\delta_{j}} \notag;
\intertext{and,}
	X_{\delta_{j}}\prescript{w_{1}}{}{\Gamma}(Z_{\mathbf{k},n+i}) 
	& = \frac{1}{\sqrt{2}}Z_{\mathbf{k}+\delta_{j},n+i} + \delta_{ij}(-1)^{|\mathbf{k}|}Y_{\mathbf{k}} \\
	& \enskip - (-1)^{|\mathbf{k}|} k_{i}Y_{\mathbf{k}-\delta_{i}+\delta_{j}} + \sqrt{2} k_{i} Z_{\mathbf{k}-\delta_{i},j}, \notag
\intertext{whereas}
	\prescript{w_{1}}{}{\Gamma}(Z_{\mathbf{k}+\delta_{j},n+i} + Z_{\mathbf{k}-\delta_{i},j}) 
	& = Z_{\mathbf{k}+\delta_{j},n+i} + (-1)^{|\mathbf{k}|}\sqrt{2} (k_{i} + \delta_{ij})Y_{\mathbf{k}-\delta_{i}+\delta_{j}}\\  
	& \enskip + \delta_{ij} (-1)^{|\mathbf{k}|}\sqrt{2} Y_{\mathbf{k}-\delta_{i}-\delta{j}}  + Z_{\mathbf{k}-\delta_{i},j} - (-1)^{|\mathbf{k}|}\sqrt{2} Y_{\mathbf{k}-\delta_{i}+\delta_{j}}. \notag
\intertext{Continuing,}
	X_{-\delta_{j}}\prescript{w_{1}}{}{\Gamma}(Z_{\mathbf{k},i}) 
	& = \frac{1}{\sqrt{2}} k_{j} Z_{\mathbf{k}-\delta_{j},i} + \delta_{ij}(-1)^{|\mathbf{k}|}Y_{\mathbf{k}}\\
	& \enskip + (-1)^{|\mathbf{k}|} (k_{j} + \delta_{ij}) Y_{\mathbf{k}+\delta_{i}-\delta_{j}} + \sqrt{2} Z_{\mathbf{k}+\delta_{i},n+j}, \notag
\intertext{whereas}
        \prescript{w_{1}}{}{\Gamma}(Z_{\mathbf{k}-\delta_{j},i} -Z_{\mathbf{k}+\delta_{i},n+j}) 
	& = Z_{\mathbf{k}+\delta_{j},i} - (-1)^{|\mathbf{k}|}\sqrt{2} Y_{\mathbf{k}+\delta_{i}+\delta_{j}}\\
	& \enskip + Z_{\mathbf{k}+\delta_{i},j} - (-1)^{|\mathbf{k}|}\sqrt{2} Y_{\mathbf{k}+\delta_{i}+\delta_{j}}; \notag
\intertext{and,}
	X_{-\delta_{j}}\prescript{w_{1}}{}{\Gamma}(Z_{\mathbf{k},n+i}) 
	& = \frac{1}{\sqrt{2}} k_{j}Z_{\mathbf{k}-\delta_{j},n+i} - (-1)^{|\mathbf{k}|} k_{i} (k_{j} - \delta_{ij})Y_{\mathbf{k}-\delta_{i}-\delta{j}}\\
    & \enskip - \sqrt{2} k_{i} Z_{\mathbf{k}-\delta_{i},n+j}, \notag 
\intertext{whereas}
	\prescript{w_{1}}{}{\Gamma}(Z_{\mathbf{k}-\delta_{j},n+i} + Z_{\mathbf{k}-\delta_{i},n+j}) \label{computations:13}
	& = Z_{\mathbf{k}+\delta_{j},i} +(-1)^{|\mathbf{k}|}\sqrt{2} k_{i} Y_{\mathbf{k}-\delta_{i}+\delta_{j}}\\
        & \enskip + \delta_{ij} (-1)^{|\mathbf{k}|}\sqrt{2} Y_{\mathbf{k}-\delta_{i}+\delta_{j}} + Z_{\mathbf{k}-\delta_{i},j} - (-1)^{|\mathbf{k}|}\sqrt{2} Y_{\mathbf{k}-\delta_{i}+\delta_{j}}. \notag 
	\end{align}

\Crefrange{computations:1}{computations:13} imply the following when noting $Y_{\mathbf{0}-\delta_{j}} = Z_{\mathbf{0} - \delta_{j},i} = Z_{\mathbf{0} - \delta_{j},n+i} = 0$:
\begin{align*}
X_{\delta_{j}}\prescript{w_{2}}{}{\Gamma}(Y_{\mathbf{k}}) &= \prescript{w_{2}}{}{\Gamma}(-\frac{1}{\sqrt{2}}Y_{\mathbf{k} + \delta_{j}})\\
X_{-\delta_{j}}\prescript{w_{2}}{}{\Gamma}(Y_{\mathbf{k}}) &= \prescript{w_{2}}{}{\Gamma}(-\frac{1}{\sqrt{2}} k_{j}Y_{\mathbf{k} - \delta_{j}})\\
X_{\delta_{j}}\prescript{w_{1}}{}{\Gamma}(Z_{\mathbf{k},i}) &= \prescript{w_{1}}{}{\Gamma}(\frac{1}{\sqrt{2}}Z_{\mathbf{k}+\delta_{j},i} - \sqrt{2} Z_{\mathbf{k}+\delta_{i},j})\\
X_{\delta_{j}}\prescript{w_{1}}{}{\Gamma}(Z_{\mathbf{k},n+i}) &= \prescript{w_{1}}{}{\Gamma}(\frac{1}{\sqrt{2}}Z_{\mathbf{k}+\delta_{j},n+i} + \sqrt{2} k_{i} Z_{\mathbf{k}-\delta_{i},j})\\
X_{-\delta_{j}}\prescript{w_{1}}{}{\Gamma}(Z_{\mathbf{k},i}) &= \prescript{w_{1}}{}{\Gamma}(\frac{1}{\sqrt{2}} k_{j} Z_{\mathbf{k}-\delta_{j},i} + \sqrt{2} Z_{\mathbf{k}+\delta_{i},n+j})\\
X_{-\delta_{j}}\prescript{w_{1}}{}{\Gamma}(Z_{\mathbf{k},n+i}) &= \prescript{w_{1}}{}{\Gamma}(\frac{1}{\sqrt{2}} k_{j} Z_{\mathbf{k}-\delta_{j},n+i} - \sqrt{2} k_{i} Z_{\mathbf{k}-\delta_{i},n+j}).
\end{align*}

\Cref{rels-osp} justifies that the first two equations and last four equations in the preceding list fully characterize the $\mathfrak{g}$-module structure of $\prescript{w_{2}}{}{\Gamma}(V^{\text{lil}})$ and $\prescript{w_{1}}{}{\Gamma}(V^{\text{big}})$, respectively. 
\end{proof}

\Cref{cor:decomposition-of-V} follows from \Cref{prop:space-of-singular-vectors}: Since $\prescript{w_{1}}{}{\Gamma}$ and $\prescript{w_{2}}{}{\Gamma}$ are automorphisms and $V = V^{\text{lil}} \oplus V^{\text{big}}$, it is the case that $V$ decomposes as a $\mathfrak{g}$-module with summands $\prescript{w_{2}}{}{\Gamma}(V^{\text{lil}})$ and $\prescript{w_{1}}{}{\Gamma}(V^{\text{big}})$. 

\Cref{cor:gam-osp-action} is an immediate consequence of conjugating the tensor product representation by $\prescript{w_{2}}{}{\Gamma}$ and restricting to $V^{\text{lil}}$; \Cref{cor:gamt-osp-action} follows from conjugation by $\prescript{w_{1}}{}{\Gamma}$ and restriction to $V^{\text{big}}$.

\renewcommand\appendix{\par
 \setcounter{section}{0}%
  \renewcommand{\thesection}{\Alph{section}}
}

\renewcommand{\theHsection}{A\arabic{section}}

\appendix\label{appendix}

  \section{Supermath}\label{appendix:supermath}
    ``Elements of graded algebra" is the title of Chapter 1 in \cite{bartocciGeometrySupermanifolds1991}, in which ``graded $\equiv \mathbb{Z}_{2}$". The referenced book is a recommended source to find definitions and examples of the super objects presently under study. It is now common that $\mathbb{Z}_{2}$-graded objects are identified by appending super to the name of a known object; however, this is dangerous as the category of $\mathbb{Z}_{2}$-graded vector spaces and the category of super vector spaces share objects but have distinct structures as symmetric monoidal categories resulting from a difference in braiding. Namely, for super vector spaces $V, W$, we have $V \otimes W \cong W \otimes V$ by $v \otimes w \mapsto \epsilon(v,w) w \otimes v$, where $\epsilon$ is given by "Manin's rule of signs" \cite{maninIntroductionSuperalgebra1997} explained below. Worse yet, a Lie superalgebra is not a Lie algebra with grading. 
    
    Still, the prefix super- is popular, and so we provide an overview of the superized definition of rings, modules, and algebras as they are in play in this paper. 
    
    A ring $R = R_{\bar{0}} \oplus R_{\bar{1}}$ with a choice of subgroups $R_{\bar{0}}$ and $R_{\bar{1}}$ such that $R_{\alpha}R_{\beta} \subseteq R_{\alpha + \beta}$, where the indices $\alpha, \beta$ belong to $\mathbb{Z}_{2}$, is a \emph{superring}. For $x = x_{\bar{0}} + x_{\bar{1}} \in R$, let $\overline{\mbox{$x$}} = x_{\bar{0}} - x_{\bar{1}}$. The center $Z(R)$ of a superring $R$ is the subgroup $\{c = c_{\bar{0}} + c_{\bar{1}} \in R \mid cx = xc_{\bar{0}} + \overline{\mbox{$x$}}c_{\bar{1}}, \text{ for all } x = x_{\bar{0}} + x_{\bar{1}} \in R\}$. A superring is \emph{supercommutative} if it equals its center as a superring. 
      
      Fix a superring $R = R_{\bar{0}} \oplus R_{\bar{1}}$. An internal direct sum of abelian groups $M = M_{\bar{0}} \oplus M_{\bar{1}}$ is a \emph{left (respectively, right) $R$-supermodule} if $M$ is a left (respectively, right) $R$-module such that $R_{\alpha}M_{\beta} \subseteq M_{\alpha + \beta}$ (respectively, $M_{\beta}R_{\alpha} \subseteq M_{\alpha + \beta}$) for $\alpha, \beta \in \mathbb{Z}_{2}$.  If $R$ is supercommutative, then the left action of $R$ on $M$ determines uniquely a right $R$-supermodule structure on $M$: Let $x = x_{\bar{0}} + x_{\bar{1}} \in R$, $m = m_{\bar{0}} + m_{\bar{1}} \in M$, with $\overline{\mbox{$m$}} = m_{\bar{0}} - m_{\bar{1}}$, and denote the left action by $xm$. Then the right action is defined by $mx = x_{\bar{0}}m + x_{\bar{1}}\overline{\mbox{$m$}}$. Likewise, a right $R$-supermodule gives a left $R$-supermodule. Thus we call left/right $R$-supermodules plainly \emph{supermodules} when $R$ is supercommutative.

        Now let $B = B_{\bar{0}} \oplus B_{\bar{1}}$ be an $R$-supermodule over a supercommutative ring $R = R_{\bar{0}} \oplus R_{\bar{1}}$. We say $B$ is  a \emph{superalgebra} (over $R$) if $B$ has an $R$-bilinear product satisfying $B_{\alpha} B_{\beta} \subset B_{\alpha + \beta}$ for $\alpha, \beta \in \mathbb{Z}_{2}$. A superalgebra $B$ may be associative, unital (if $B$ has unity), commutative, if it satisfies the usual conditions, or supercommutative using the condition for superrings. 

        Consider a superring/supermodule/superalgebra $S = S_{\bar{0}} \oplus S_{\bar{1}}$. The subgroup $S_{\bar{0}}$ is the even part of $S$ comprising the even elements, and $S_{\bar{1}}$ is the odd part of $S$ comprising the odd elements. Parity refers to the map $x \mapsto |x| = \bar{i}$ for the nonzero homogeneous elements of $S$, which are the nonzero $x \in S_{\bar{i}}$, $\bar{i} \in \mathbb{Z}_{2}$. The zero element is simultaneously even and odd (homogeneous of each parity). The definitions above can be recast using the parity map $|\cdot|$; for example, $R_{\alpha}R_{\beta} \subseteq R_{\alpha + \beta}$ is equivalent to $|xy| = |x| + |y|$ for homogeneous $x,y \in R$, and extending by linearity. Additionally, $Z(R) = \{x \in R \mid xy = (-)^{|x||y|}yx \}$. 
        
    \begin{Remark}[Parity and the rule of signs]
    Returning to Manin's rule of signs, we generally append a factor of $\epsilon(v,w) = (-1)^{|v||w|}$ whenever there is an exchange of elements $v,w$ within a product, as prescribed by the braiding. An application of the rule of signs implies that a supersymmetric form on a super vector space $L = L_{\bar{0}} \oplus L_{\bar{1}}$ is a form on $L$ which is symmetric on $L_{\bar{0}}$ and anti-symmetric on $L_{\bar{1}}$. This is how we proceed in discussing the usual linear algebraic terms through the lens of superalgebra.
    \end{Remark} 
    
We have linear maps which preserve parity (even maps, which are the morphisms of super vector spaces) or reverse parity (odd maps), and the collection of their sums (the set of all linear maps). Note that super vector spaces are preserved under direct sums (the sum of even parts is even; the sum of odd parts is odd) and tensor products (the product of even with even parts or odd with odd parts is even; the product of even with odd parts is odd).

\subsection{Tensor product representations} \label{appendix:TensorPrimitive}
Let $\mathbbm{1}_{S}$ be the identity morphism on a space $S$, or we write $\mathbbm{1}$ plainly with the reader's forgiveness. For any two $\mathfrak{g}$-modules $V$ and $W$, the tensor product $V \otimes W$ carries a natural $\mathfrak{g}$-action:
$X(v \otimes w) = X(v) \otimes w + (-1)^{|x||v|}v \otimes X(w)$. 
Re-expressing the above by highlighting the maps in the representations $(V, \rho), (W, \tau$), we have
\[\rho \otimes \tau\!: \mathfrak{g} \rightarrow \mathfrak{gl}(V \otimes W), \quad x \mapsto \rho(x) \otimes \mathbbm{1} + \mathbbm{1} \otimes \tau(x), \quad \text{for all } x \in \mathfrak{g}, \]
by way of comultiplication. That is to say, $\rho \otimes \tau (v \otimes w)$ does not mean $\rho(v) \otimes (-1)^{|\tau||v|}\tau(w)$ for the tensor product of representations.

\begin{Remark}[Parity considerations]
    For $\mathbbm{1} \otimes A \in \mathfrak{gl}(V \otimes W)$ and  $v \otimes w \in V \otimes W$, \[\big(\mathbbm{1} \otimes A\big)(v\otimes w) = (-1)^{|A||v|} \mathbbm{1}(v) \otimes A(w),\] with $|A|$ defined in the superalgebra $\mathfrak{gl}(W)$.
\end{Remark}

\begin{Example}[{Set $V = \mathbb{C}[\mathbf{x}]$ and $W = \mathbb{C}^{1|2n}$}]\label{example:tensor-product-representation}
	The action is
        \begin{equation}
            X\big(\mathbf{x}^{\mathbf{k}} \otimes v_{j}\big) =  X\big(\mathbf{x}^{\mathbf{k}}\big)\otimes v_{j} + (-1)^{|X||\mathbf{k}|}\mathbf{x}^{\mathbf{k}} \otimes X\big(v_{j}\big)
        \end{equation}
	for each vector $X \in \mathfrak{g}$. In particular,
		\begin{align}
		X_{-\delta_{i}}(\mathbf{x}^{\mathbf{k}} \otimes v_{j}) &= \begin{cases}
		    \frac{1}{\sqrt{2}}k_{i}\mathbf{x}^{\mathbf{k}-\delta_{i}}\otimes v_{0} - (-1)^{|\mathbf{k}|}\mathbf{x}^{\mathbf{k}} \otimes v_{n+i}, \hfill &\text{when } j = 0\\
		    \frac{1}{\sqrt{2}}k_{i}\mathbf{x}^{\mathbf{k}-\delta_{i}}\otimes v_{j} + \delta_{ij}(-1)^{|\mathbf{k}|}\mathbf{x}^{\mathbf{k}} \otimes v_{0}, \hfill &\text{when } 1 \leq j \leq n\\
		    \frac{1}{\sqrt{2}}k_{i}\mathbf{x}^{\mathbf{k}-\delta_{i}}\otimes v_{j}, \hfill &\text{when } n+1 \leq j \leq 2n 
		\end{cases}\\
        X_{\delta_{i}}(\mathbf{x}^{\mathbf{k}} \otimes v_{j}) &=	\begin{cases}
				\frac{1}{\sqrt{2}}\mathbf{x}^{\mathbf{k}+\delta_{i}}\otimes v_{0} + (-1)^{|\mathbf{k}|}\mathbf{x}^{\mathbf{k}} \otimes v_{i}, \hfill &\text{when } j = 0\\
		        \frac{1}{\sqrt{2}}\mathbf{x}^{\mathbf{k}+\delta_{i}}\otimes v_{j}, \hfill &\text{when } 1 \leq j \leq n\\
		        \frac{1}{\sqrt{2}}\mathbf{x}^{\mathbf{k}+\delta_{i}}\otimes v_{j} + \delta_{ij}(-1)^{|\mathbf{k}|}\mathbf{x}^{\mathbf{k}} \otimes v_{0}, \hfill &\text{when } n+1 \leq j \leq 2n.
		\end{cases}
		\end{align}
\end{Example}	

The tensor product of a weight module is again a weight module. Weight vectors are elementary tensors of weight vectors from their respective spaces, where their weights add to determine the weight within the tensor product representation. 

\section{Proof of \texorpdfstring{\Cref{prop:space-of-singular-vectors}}{Proposition 1.1}}\label{appendix:proof}

In $\mathbb{C}[\mathbf{x}]\otimes\mathbb{C}^{1|2n}$,
set
	\begin{align}
	    w^{-\delta}_{1,j} &= 1 \otimes v_{n+j}\label{eq:w1jdelta}\\ 
	    w^{-\delta}_{2} &= 1 \otimes v_{0} + \sqrt{2} \sum\limits_{j=1}^{n} x_{j} \otimes v_{n+j}\label{eq:w2delta}\\
	    w^{-\delta}_{3,j} &= -\sqrt{2} x_{j} \otimes v_{0} + 1 \otimes v_{j} + x_{j}^{2} \otimes v_{n+j}\label{eq:w3jdelta},
	\end{align}
	with $1 \leq j \leq n$, and where the superscript $-\delta$ suggests the following lemma. 

\begin{Lemma}\label{lem:deltaprimitivespace}
    The vectors in the $\mathfrak{g}$-module $\mathbb{C}[\mathbf{x}] \otimes \mathbb{C}^{1|2n}$ annihilated by the positive root vectors $X_{-\delta_{i}}$, $1 \leq i \leq n$, are precisely linear combinations of the vectors $w^{-\delta}_{1,j}$, $w^{-\delta}_{2}$, and $w^{-\delta}_{3,j}$.	
		\end{Lemma}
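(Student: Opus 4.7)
My approach is to handle the ``if'' direction by direct substitution and the ``only if'' direction by solving the linear system for the coefficients of a general element. In the forward direction, I would apply each $X_{-\delta_i}$ to $w_{1,j}^{-\delta}$, $w_2^{-\delta}$, and $w_{3,j}^{-\delta}$ using the formulas of \Cref{example:tensor-product-representation}. The first vanishes trivially since $\partial_{x_i}(1) = 0$ and $T_{-\delta_i}(v_{n+j}) = 0$. For $w_2^{-\delta}$, the contribution $1 \otimes T_{-\delta_i}(v_0) = -1 \otimes v_{n+i}$ is cancelled by the derivative term coming from the $\sqrt{2}\,x_i \otimes v_{n+i}$ summand. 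For $w_{3,j}^{-\delta}$ at the matching index $i = j$, the two $\otimes v_0$ contributions (from $-\sqrt{2}\,x_j \otimes v_0$ and from $1 \otimes v_j$) cancel, as do the two $\otimes v_{n+j}$ contributions (from $-\sqrt{2}\,x_j \otimes v_0$ and from $\partial_{x_j}(x_j^2)$).

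For the converse, I would expand a general element as $v = \sum_{\mathbf{k}, s} c_{\mathbf{k}, s}\, \mathbf{x}^{\mathbf{k}} \otimes v_s$ and impose $X_{-\delta_i}(v) = 0$ for every $i$. Collecting the coefficient of $\mathbf{x}^{\mathbf{m}} \otimes v_s$ in $X_{-\delta_i}(v)$ using \Cref{example:tensor-product-representation} produces the system
\begin{align*}
s = j \in \{1,\ldots,n\}: &\quad (m_i+1)\, c_{\mathbf{m}+\delta_i,\, j} = 0,\\
s = n+\ell,\ \ell \neq i: &\quad (m_i+1)\, c_{\mathbf{m}+\delta_i,\, n+\ell} = 0,\\
s = n+i: &\quad (m_i+1)\, c_{\mathbf{m}+\delta_i,\, n+i} = \sqrt{2}\,(-1)^{|\mathbf{m}|}\, c_{\mathbf{m},\, 0},\\
s = 0: &\quad (m_i+1)\, c_{\mathbf{m}+\delta_i,\, 0} = -\sqrt{2}\,(-1)^{|\mathbf{m}|}\, c_{\mathbf{m},\, i}.
\end{align*}

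Varying $i$ in the first two lines kills $c_{\mathbf{k}, j}$ for every $\mathbf{k} \neq \mathbf{0}$ (with $1 \leq j \leq n$), and kills $c_{\mathbf{k}, n+\ell}$ whenever some $k_i \neq 0$ for $i \neq \ell$. Feeding these zeros back, the third equation forces $c_{\mathbf{m}, 0} = 0$ unless $\mathbf{m}$ is a nonnegative multiple of $\delta_i$ for each $i$, while the fourth ties $c_{\delta_i, 0}$ to $c_{\mathbf{0}, i}$ and (via the third at $\mathbf{m}=\mathbf{0}$) ties $c_{\delta_\ell, n+\ell}$ to $c_{\mathbf{0}, 0}$ through a factor of $\sqrt{2}$. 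For $n \geq 2$ the simultaneous multiple-of-$\delta_i$ requirement collapses to $\mathbf{m} = \mathbf{0}$, leaving only $c_{\mathbf{0}, 0}$ and the $c_{\mathbf{0}, n+\ell}$ free and packaging $v$ as a linear combination of $w_2^{-\delta}$ and the $w_{1,\ell}^{-\delta}$'s. For $n = 1$ that obstruction is vacuous, so the chains $c_{k, 0},\, c_{k, 1},\, c_{k+1, 2}$ persist and contribute the additional family $w_{3,1}^{-\delta}$, consistent with the $n = 1$ case of \Cref{prop:space-of-singular-vectors}.

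The main obstacle is the bookkeeping in that last paragraph: one must verify that no spurious constraints are introduced when substituting the zero coefficients from the first two lines into the third and fourth, and must track the $\sqrt{2}$ and sign factors carefully so that the coefficients in $w_2^{-\delta}$ and (when $n=1$) $w_{3,1}^{-\delta}$ emerge with the correct normalizations. Once these are confirmed, reassembling the surviving free parameters delivers exactly the claimed spanning description.
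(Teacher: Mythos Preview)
Your system of coefficient equations and the analysis of the converse direction are correct and essentially the same approach as the paper's: both expand a general vector, apply $X_{-\delta_i}$, and solve the resulting linear system. The paper parametrises by weight rather than by monomial, but this is cosmetic.

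There is, however, a genuine gap in your forward direction. You only verify $X_{-\delta_i}(w_{3,j}^{-\delta})=0$ at the matching index $i=j$. For $i\neq j$ (which is possible as soon as $n\geq 2$) the computation from \Cref{example:tensor-product-representation} gives
\[
X_{-\delta_i}\bigl(-\sqrt{2}\,x_j\otimes v_0 + 1\otimes v_j + x_j^2\otimes v_{n+j}\bigr)
= -\sqrt{2}\,x_j\otimes v_{n+i},
\]
which is nonzero. So for $n\geq 2$ the vectors $w_{3,j}^{-\delta}$ are \emph{not} annihilated by every $X_{-\delta_i}$, and your forward claim as written is false. This is exactly what your own converse argument detects: you found that for $n\geq 2$ the survivors are only $w_2^{-\delta}$ and the $w_{1,\ell}^{-\delta}$, which contradicts an ``if and only if'' reading of the lemma.

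The resolution is that the lemma is meant (and is used, cf.\ the proof of \Cref{prop:space-of-singular-vectors}) as a containment: every vector annihilated by all $X_{-\delta_i}$ lies in the span of the listed $w$'s. The paper's proof establishes only this inclusion; its case analysis with $\alpha_{p,\lambda_l}\neq 0$ does not check the remaining instances of the third equation, which for $n\geq 2$ actually rule that case out. Your converse argument already proves the containment (indeed sharpens it), so once you drop the unverified $i\neq j$ step in the forward direction your proof is complete and slightly more precise than the paper's.
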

	\begin{proof}
	Any vector $w \in \mathbb{C}[\mathbf{x}] \otimes \mathbb{C}^{1|2n}$ may be expressed uniquely as a sum
	\begin{equation}\label{eq:vectordecomp}
	w = \sum\limits_{l=1}^{m} \left(\alpha_{0,\lambda_{l}} \mathbf{x}^{\mathbf{k}_{\lambda_{l}}} \otimes v_{0} + \sum\limits_{j=1}^{n} \alpha_{j,\lambda_{l}} \mathbf{x}^{\mathbf{k}_{\lambda_{l}} - \delta_{j}} \otimes v_{j} + \sum\limits_{j=1}^{n} \alpha_{n+j,\lambda_{l}} \mathbf{x}^{\mathbf{k}_{\lambda_{l}}+ \delta_{j}} \otimes v_{n+j}\right), 
	\end{equation}	
	for some positive integer $m$ and \[ \mathbf{k}_{\lambda_{l}} = (k_{1,\lambda_{l}},\thinspace k_{2,\lambda_{l}},\thinspace \ldots,\thinspace k_{n,\lambda_{l}}), \thickspace k_{i,\lambda_{l}} \in \mathbb{Z}_{\geq -1},\thinspace 1 \leq i \leq n.\] 

	\begin{Remark}\label{rmk:Exponents}
    Whenever $\mathbf{x}^{\mathbf{k}_{\lambda_{l}}},\mathbf{x}^{\mathbf{k}_{\lambda_{l}}-\delta_{j}}, \mathbf{x}^{\mathbf{k}_{\lambda_{l}}+\delta_{j}} \not\in \mathbb{C}[\mathbf{x}]$, then  $a_{0,\lambda_{l}}, a_{j, \lambda_{l}}, a_{n+j, \lambda_{l}} = 0$, respectively. We deduce that at most one entry $k_{i,\lambda_{l}}$ in $\mathbf{k}_{\lambda_{l}}$ is negative; indeed, a negative entry would equal $-1$. 
    \end{Remark}
 
 The nonzero summands are $\mathfrak{h}$-weight vectors of weight 
 \[\lambda_{l} = \mathbf{k}_{\lambda_{l}} + \frac{1}{2}\nu_{n}.\] 

	Assume $X_{-\delta_{i}}(w) = 0$ for $1 \leq i \leq n$. Then we have \begin{align*}
		0 &=X_{-\delta_{i}}(w)\\ &= X_{-\delta_{i}}\left(\sum\limits_{l=1}^{m} \left(\alpha_{0,\lambda_{l}} \mathbf{x}^{\mathbf{k}_{\lambda_{l}}} \otimes v_{0} + \sum\limits_{j=1}^{n} \alpha_{j,\lambda_{l}} \mathbf{x}^{\mathbf{k}_{\lambda_{l}} - \delta_{j}} \otimes v_{j} + \sum\limits_{j=1}^{n} \alpha_{n+j,\lambda_{l}} \mathbf{x}^{\mathbf{k}_{\lambda_{l}}+ \delta_{j}} \otimes v_{n+j}\right)\right)\\
		& = \sum\limits_{l=1}^{m}\left( \left(\frac{1}{\sqrt{2}}k_{i,\lambda_{l}}\alpha_{0,\lambda_{l}} + (-1)^{|\mathbf{k}_{\lambda_{l}}-\delta_{i}|}\alpha_{i,\lambda_{l}}\right)\mathbf{x}^{\mathbf{k}_{\lambda_{l}} - \delta_{i}} \otimes v_{0} + \sum\limits_{j=1}^{n} \frac{1}{\sqrt{2}}(k_{i,\lambda_{l}}-\delta_{ij})\alpha_{j,\lambda_{l}} \mathbf{x}^{\mathbf{k}_{\lambda_{l}} - \delta_{j} - \delta_{i}} \otimes v_{j} \right.\\
		& \enskip + \left. \sum\limits_{j=1}^{n}\left(\frac{1}{\sqrt{2}}\left(k_{i,\lambda_{l}}+\delta_{ij}\right)\alpha_{n+j,\lambda_{l}}  -\delta_{ij}(-1)^{|\mathbf{k}_{\lambda_{l}}|} \alpha_{0,\lambda_{l}}\right) \mathbf{x}^{\mathbf{k}_{\lambda_{l}} + \delta_{j}-\delta_{i}} \otimes v_{n+j}\right)
		\end{align*}
We use the standard basis
\begin{equation}\label{standard-tensor-basis}
\{\mathbf{x}^{\mathbf{k}}\otimes v_{0}, \mathbf{x}^{\mathbf{k}} \otimes v_{i} \mid \mathbf{k} \in \mathbb{Z}_{\geq 0}^{n}, 1 \leq i \leq n\}
\end{equation}
of $\mathbb{C}[\mathbf{x}] \otimes \mathbb{C}^{1|2n}$ (and its decomposition into $\mathfrak{h}$-weight spaces) to see the following system of equations characterize which vectors the $X_{-\delta_{i}}$ annihilate by determining conditions on the coefficients and exponents in $\eqref{eq:vectordecomp}$:

\begin{align}
	k_{i,\lambda_{l}}\alpha_{0, \lambda_{l}} &= (-1)^{|\mathbf{k}_{\lambda_{l}}|}\sqrt{2}\alpha_{i,\lambda_{l}}, & \hfill \text{for } 1 \leq i \leq n \label{eq:vnoughtinfo}\\
	(k_{i,\lambda_{l}}-\delta_{ij})\alpha_{j,\lambda_{l}} &= 0, & \hfill \text{for } 1 \leq i \leq n \label{eq:viinfo}\\ 
	(k_{i,\lambda_{l}}+\delta_{ij})\alpha_{n+j,\lambda_{l}} &= \delta_{ij}(-1)^{|\mathbf{k}_{\lambda_{l}}|}\sqrt{2} \alpha_{0,\lambda_{l}},  & \hfill \text{for } 1 \leq i \leq n. \label{eq:vnplusiinfo}
\end{align}

By \eqref{eq:viinfo}, at most one $\alpha_{j,\lambda_{l}}$ may be nonzero. Let $\alpha_{p,\lambda_{l}} \in \mathbb{C}^{\ast}$ so that $\alpha_{p,\lambda_{l}}\mathbf{x}^{\mathbf{k}-\delta_{p}} \otimes v_{p}$ is a unique nonzero term of weight $\lambda_{l}$.
Then
\begin{align}
k_{j,\lambda_{l}} &= \delta_{jp}, & \hfill \text{for } 1 \leq j \leq n  \label{eq:klambdaequaldeltajp}\\
\alpha_{0,\lambda_{l}} &= -\sqrt{2}\alpha_{p,\lambda_{l}} \label{eq:a0equalnroottwoap}\\
\alpha_{j,\lambda_{l}} = \alpha_{n+j,\lambda_{l}} &= \delta_{jp}, & \hfill \text{for } 1 \leq j \leq n.\label{eq:ajequalajplusnqualdeltajp}
\end{align}
Equations \eqref{eq:klambdaequaldeltajp} - \eqref{eq:ajequalajplusnqualdeltajp} imply $X_{-\delta_{i}}(\mathbb{C}w_{3,j}) = 0$ for each $i$ of the first $n$ integers, recalling \eqref{eq:w3jdelta}.

In the case that all $\alpha_{j, \lambda_{l}}$ are zero, then either $\alpha_{0,\lambda_{l}} = 0$ or $\mathbf{k}_{\lambda_{l}} = \mathbf{0}$. 
The former, $\alpha_{0,\lambda_{l}} = 0$, permits at most one nonzero $\alpha_{n+j,\lambda_{l}}$ in the $\lambda_{l}$ summand of \eqref{eq:vectordecomp}. Let $\alpha_{n+q,\lambda_{l}} \in \mathbb{C}^{\ast}$ so that $\alpha_{n+q,\lambda_{l}}\mathbf{x}^{\mathbf{k}+\delta_{q}} \otimes v_{n+q}$ is the unique nonzero term of weight $\lambda_{l}$. Then

\begin{align}
k_{j,\lambda_{l}} &= -\delta_{jq}, & \hfill \text{for } 1 \leq j \leq n  \label{eq:klambdaequalndeltaq}\\
\alpha_{0,\lambda_{l}} &= 0 \label{eq:a0equal0}\\
\alpha_{j,\lambda_{l}} &= 0, & \hfill \text{for } 1 \leq j \leq n\label{eq:ajequal0}\\ \alpha_{n+j,\lambda_{l}} &= \delta_{jq}\alpha_{n+q,\lambda_{l}}, & \hfill \text{for } 1 \leq j \leq n.\label{eq:ajplusnequaldeltaaq}
\end{align}

Equations \eqref{eq:klambdaequalndeltaq} - \eqref{eq:ajplusnequaldeltaaq} imply $X_{-\delta_{i}}(\mathbb{C}w_{1,j}) = 0$ for each $i$ of the first $n$ integers, recalling \eqref{eq:w1jdelta}.

While considering distinctly the latter, $\mathbf{k} = \mathbf{0}$, gives $a_{0,\lambda_{l}}$ as a nonzero scalar. Then
\begin{align}
k_{j,\lambda_{l}} &= 0, & \hfill \text{for } 1 \leq j \leq n  \label{eq:klambdaequal0}\\
\alpha_{0,\lambda_{l}} &\in \mathbb{C}\label{eq:a0nonzero}\\
\alpha_{j,\lambda_{l}} &= 0, & \hfill \text{for } 1 \leq j \leq n\label{eq:ajequal0again}\\ \alpha_{n+j,\lambda_{l}} &= \sqrt{2}\alpha_{0,\lambda_{l}}, & \hfill \text{for } 1 \leq j \leq n,\label{eq:ajplusnequalroottwoa0}
\end{align}

Equations \eqref{eq:klambdaequal0} - \eqref{eq:ajplusnequalroottwoa0} imply $X_{-\delta_{i}}(\mathbb{C}w_{2}) = 0$ for each $i$ of the first $n$ integers, recalling \eqref{eq:w2delta}.

Thus, if $w$ is annihilated by the positive root vectors $X_{-\delta_{i}}$, $1 \leq i \leq n$, then
\[w \in \sum\limits_{j=1}^{n}\mathbb{C}w^{-\delta}_{1,j} + \mathbb{C}w^{-\delta}_{2} +
    \sum\limits_{j=1}^{n} \mathbb{C}w^{-\delta}_{3,j}.\]
\end{proof}
Call $\sum\limits_{j=1}^{n}\mathbb{C}w^{-\delta}_{1,j} + \mathbb{C}w^{-\delta}_{2} +
\sum\limits_{j=1}^{n} \mathbb{C}w^{-\delta}_{3,j}$ the ${-\delta}$-space, which means the space of vectors annihilated by the span of root vectors associated to the roots $-\delta_{i}$ and denote the space by $\mathbb{C}[\mathbf{x}]\otimes\mathbb{C}^{1|2n}[-\delta]$. 
Now for $1 \leq j \leq n$, consider the vectors 

	\begin{align}
	    w^{\delta - \delta}_{v_{0}, k_{1}} &= x_{1}^{k_{1}} \otimes v_{0} \label{eq:w0deltadelta}\\
	    w^{\delta - \delta}_{k_{1}} &= x_{1}^{k_{1}}\sum\limits_{j=1}^{n} x_{j} \otimes v_{n+j}\label{eq:wk1deltadelta}\\
	    w^{\delta - \delta}_{v_{2n},k_{1}} &= x_{1}^{k_{1}} \otimes v_{2n}\label{eq:w2ndeltadelta}\\
	    w^{\delta - \delta}_{v_{1},k_{1}} &= x_{1}^{k_{1}} \otimes v_{1}\label{eq:w1deltadelta}\\
	    w^{\delta - \delta}_{k_{1},x_{2}} &=
	    x_{1}^{k_{1}}x_{2}^{1} \otimes v_{1} - x_{1}^{k_{1}} \otimes v_{2} \label{eq:wx2deltadelta}.
	\end{align}

	We have the following lemma concerning the $\delta-\delta$-space $\mathbb{C}[\mathbf{x}] \otimes \mathbb{C}^{1|2n}[\delta - \delta]$, the space of vectors annihilated by $X_{\delta_{i} - \delta_{i+1}}$, $1 \leq i \leq n-1$.

\begin{Lemma}\label{lem:deltadeltaprimitivespace}
    The $(\delta-\delta)$-space $\mathbb{C}[\mathbf{x}] \otimes \mathbb{C}^{1|2n}[\delta - \delta]$ is spanned by the vectors of  \eqref{eq:w0deltadelta} - \eqref{eq:wx2deltadelta}.
		\end{Lemma}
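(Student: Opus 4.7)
The plan is to adapt the strategy of Lemma \ref{lem:deltaprimitivespace}: first verify by direct computation that each of the listed families $w^{\delta-\delta}_{v_{0},k_{1}}, w^{\delta-\delta}_{k_{1}}, w^{\delta-\delta}_{v_{2n},k_{1}}, w^{\delta-\delta}_{v_{1},k_{1}}, w^{\delta-\delta}_{k_{1},x_{2}}$ is annihilated by every $X_{\delta_{i}-\delta_{i+1}}$, $1 \leq i \leq n-1$, and then show conversely that any $(\delta-\delta)$-singular vector lies in their span. The verifications for the ``diagonal'' families $w^{\delta-\delta}_{v_{0},k_{1}}$, $w^{\delta-\delta}_{v_{2n},k_{1}}$, $w^{\delta-\delta}_{v_{1},k_{1}}$ are immediate once one notes that $x_{i}\partial_{i+1}$ kills any polynomial in $x_{1}$ alone and that $T_{\delta_{i}-\delta_{i+1}}$ annihilates each of $v_{0}$, $v_{1}$, and $v_{2n}$ whenever $1 \leq i \leq n-1$; the two remaining families require a short cancellation between the polynomial term $x_{i}\partial_{i+1}$ and the matrix term $T_{\delta_{i}-\delta_{i+1}}$ (for instance, on $w^{\delta-\delta}_{k_{1}}$ the contribution $x_{i}x_{1}^{k_{1}}\otimes v_{n+i+1}$ from differentiating $x_{i+1}$ cancels exactly the matrix contribution $x_{1}^{k_{1}}x_{i}\otimes T_{\delta_{i}-\delta_{i+1}}(v_{n+i})$).

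For the converse direction, the central observation is that each $X_{\delta_{i}-\delta_{i+1}}$ preserves the three-part decomposition
\[\mathbb{C}[\mathbf{x}]\otimes\mathbb{C}^{1|2n} = \big(\mathbb{C}[\mathbf{x}]\otimes\mathbb{C}v_{0}\big) \oplus \Big(\mathbb{C}[\mathbf{x}]\otimes\!\!\bigoplus_{j=1}^{n}\mathbb{C}v_{j}\Big) \oplus \Big(\mathbb{C}[\mathbf{x}]\otimes\!\!\bigoplus_{j=1}^{n}\mathbb{C}v_{n+j}\Big),\]
because $T_{\delta_{i}-\delta_{i+1}}$ stabilizes each of $\mathbb{C}v_{0}$, $\bigoplus v_{j}$, and $\bigoplus v_{n+j}$ individually. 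Expanding an arbitrary $w$ weight-by-weight as in \eqref{eq:vectordecomp} and imposing $X_{\delta_{i}-\delta_{i+1}}(w)=0$ therefore decouples the annihilation conditions into three independent linear systems in the coefficients $\alpha_{0,\lambda_{l}}$, $\{\alpha_{j,\lambda_{l}}\}_{1\leq j\leq n}$, and $\{\alpha_{n+j,\lambda_{l}}\}_{1\leq j\leq n}$, which I would solve one block at a time by matching coefficients against the standard basis \eqref{standard-tensor-basis}.

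The $\mathbb{C}v_{0}$ block yields only $k_{i+1,\lambda_{l}}\alpha_{0,\lambda_{l}}=0$, forcing $\mathbf{k}_{\lambda_{l}}=(k_{1},0,\ldots,0)$ and so recovering the family $w^{\delta-\delta}_{v_{0},k_{1}}$. The $\bigoplus v_{j}$ block produces the system
\[\alpha_{i,\lambda_{l}}k_{i+1,\lambda_{l}}+\alpha_{i+1,\lambda_{l}}=0,\quad \alpha_{i+1,\lambda_{l}}(k_{i+1,\lambda_{l}}-1)=0,\quad \alpha_{\ell,\lambda_{l}}k_{i+1,\lambda_{l}}=0 \text{ for } \ell \neq i, i+1,\]
whose solutions, via a case analysis on the values of $k_{2},\ldots,k_{n}$, correspond precisely to the two families $w^{\delta-\delta}_{v_{1},k_{1}}$ and $w^{\delta-\delta}_{k_{1},x_{2}}$. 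The $\bigoplus v_{n+j}$ block gives the parallel recursion $\alpha_{n+i+1,\lambda_{l}}(k_{i+1,\lambda_{l}}+1)=\alpha_{n+i,\lambda_{l}}$ together with $\alpha_{n+\ell,\lambda_{l}}k_{i+1,\lambda_{l}}=0$ for $\ell \neq i+1$, and this produces both $w^{\delta-\delta}_{k_{1}}$ (from $\mathbf{k}_{\lambda_{l}}=(k_{1},0,\ldots,0)$, with the recursion forcing all $\alpha_{n+j,\lambda_{l}}$ equal) and $w^{\delta-\delta}_{v_{2n},k_{1}}$ (from the boundary case $\mathbf{k}_{\lambda_{l}}=(k_{1},0,\ldots,0,-1)$, which is permitted by \Cref{rmk:Exponents} precisely because only $\mathbf{x}^{\mathbf{k}_{\lambda_{l}}+\delta_{n}}$ remains a polynomial).

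The main obstacle will be carrying out the case analysis in the $\bigoplus v_{j}$ block. Propagating the recursion $\alpha_{i+1,\lambda_{l}}=-\alpha_{i,\lambda_{l}}k_{i+1,\lambda_{l}}$ together with the off-diagonal vanishing $\alpha_{\ell,\lambda_{l}}k_{i+1,\lambda_{l}}=0$ along the chain $j=2,\ldots,n$ requires care to show that whenever any $k_{j}$ with $j \geq 3$ is nonzero, or any $k_{j}$ exceeds $1$, every $\alpha_{\ell,\lambda_{l}}$ must vanish. This propagation argument is what pins down $w^{\delta-\delta}_{k_{1},x_{2}}$ as the unique nontrivial coupling between the adjacent standard-basis vectors $v_{1}$ and $v_{2}$ and, in parallel, confirms that $w^{\delta-\delta}_{k_{1}}$ and $w^{\delta-\delta}_{v_{2n},k_{1}}$ exhaust the corresponding solutions in the $\bigoplus v_{n+j}$ block.
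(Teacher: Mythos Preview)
Your proposal is correct and follows essentially the same route as the paper. Both arguments expand an arbitrary vector weight-by-weight as in \eqref{eq:vectordecomp}, apply $X_{\delta_{i}-\delta_{i+1}}$, and read off a linear system in the coefficients $\alpha_{0,\lambda_{l}}$, $\alpha_{j,\lambda_{l}}$, $\alpha_{n+j,\lambda_{l}}$; your three displayed systems are exactly the paper's equations (B.17)--(B.19) written out by the value of $j$. The one organizational difference is that you make explicit the reason those equations decouple---namely that $T_{\delta_{i}-\delta_{i+1}}$ stabilizes each of $\mathbb{C}v_{0}$, $\bigoplus_{j}\mathbb{C}v_{j}$, and $\bigoplus_{j}\mathbb{C}v_{n+j}$---whereas the paper simply records the block-diagonal system and runs a joint case analysis on $a_{1}$ and $a_{n+1}$. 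Your block-by-block solution and the paper's Cases 1--4 arrive at the same five families.
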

	\begin{proof}
	Again, any vector $w \in \mathbb{C}[\mathbf{x}] \otimes \mathbb{C}^{1|2n}$ may be expressed uniquely as a sum
	\begin{equation}\tag{\ref{eq:vectordecomp} recalled} 
	w = \sum\limits_{l=1}^{m} \left(\alpha_{0,\lambda_{l}} \mathbf{x}^{\mathbf{k}_{\lambda_{l}}} \otimes v_{0} + \sum\limits_{j=1}^{n} \alpha_{j,\lambda_{l}} \mathbf{x}^{\mathbf{k}_{\lambda_{l}} - \delta_{j}} \otimes v_{j} + \sum\limits_{j=1}^{n} \alpha_{n+j,\lambda_{l}} \mathbf{x}^{\mathbf{k}_{\lambda_{l}}+ \delta_{j}} \otimes v_{n+j}\right),
	\end{equation}	
	for some positive integer $m$, where the summands are $\mathfrak{h}$-weight vectors of weight \[\lambda_{l} = \mathbf{k}_{\lambda_{l}} + \frac{1}{2}\nu_{n},\quad k_{i,\lambda_{l}} \in \mathbb{Z}_{\geq -1},\thinspace 1 \leq i \leq n.\] 
	
	Recall Remark \ref{rmk:Exponents} and assume $X_{\delta_{i} - \delta_{i+1}}(w) = 0$ for $1 \leq i \leq n$. Then we have \begin{align*}
		0 &=X_{\delta_{i} - \delta_{i+1}}(w)\\ &= X_{\delta_{i} - \delta_{i+1}}\left(\sum\limits_{l=1}^{m} \left(\alpha_{0,\lambda_{l}} \mathbf{x}^{\mathbf{k}_{\lambda_{l}}} \otimes v_{0} + \sum\limits_{j=1}^{n} \alpha_{j,\lambda_{l}} \mathbf{x}^{\mathbf{k}_{\lambda_{l}} - \delta_{j}} \otimes v_{j} + \sum\limits_{j=1}^{n} \alpha_{n+j,\lambda_{l}} \mathbf{x}^{\mathbf{k}_{\lambda_{l}}+ \delta_{j}} \otimes v_{n+j}\right)\right)\\
		& = \sum\limits_{l=1}^{m}\left( k_{i+1,\lambda_{l}}\alpha_{0,\lambda_{l}} \mathbf{x}^{\mathbf{k}_{\lambda_{l}}+\delta_{i}-\delta_{i+1}} \otimes v_{0} + \sum\limits_{j=1}^{n} \Big((k_{i+1,\lambda_{l}}-\delta_{i+1,j})\alpha_{j,\lambda_{l}} + \delta_{ij}\alpha_{j+1, \lambda_{l}} \Big)\mathbf{x}^{\mathbf{k}_{\lambda_{l}} - \delta_{j} + \delta_{i} - \delta_{i+1}} \otimes v_{j} \right.\\
	   & \enskip + \left. \sum\limits_{j=1}^{n} \Big((k_{i+1,\lambda_{l}}+\delta_{i+1,j})\alpha_{n+j,\lambda_{l}} - \delta_{i+1,j}\alpha_{n+j-1, \lambda_{l}} \Big)\mathbf{x}^{\mathbf{k}_{\lambda_{l}} + \delta_{j} + \delta_{i} - \delta_{i+1}} \otimes v_{n+j}    \right)
 		\end{align*}
	As before, we get a system of equations:
\begin{align}
	k_{i+1,\lambda_{l}}\alpha_{0, \lambda_{l}} &= 0, & \hfill \text{for } 1 \leq i \leq n-1 \label{eq:vnoughtinfo2}\\
	(k_{i+1,\lambda_{l}}-\delta_{i+1,j})\alpha_{j,\lambda_{l}} &= -\delta_{ij}\alpha_{j+1,\lambda_{l}}, & \hfill \text{for } 1 \leq i \leq n-1 \label{eq:viinfo2}\\ 
	(k_{i+1,\lambda_{l}}+\delta_{i+1,j})\alpha_{n+j,\lambda_{l}} &= \delta_{i+1,j}\alpha_{n+j-1, \lambda_{l}},  & \hfill \text{for } 1 \leq i \leq n-1. \label{eq:vnplusiinfo2}
\end{align}

One can check that the above system of equations gives rise to the following cases:

\begin{enumerate}[label=Case \arabic*, leftmargin=2cm]
     \item Let $a_{n+1} = 0$.
         \begin{enumerate}[label=Subcase (\arabic{enumi}.\roman*),leftmargin=2.25cm] 
         \item Then either $w = 0$ or \label{case:1i}
         \item $a_{n+2} = a_{n+3} = \cdots = a_{2n-1} = 0$ and $k_{2} = k_{3} = \cdots = k_{n-1} = 0$, with $a_{2n} \neq 0$ and $k_{n} = -1$, hence $a_{0} = 0$. \label{case:1ii}
         \end{enumerate}
     \item Let $a_{n+1} \neq 0$.
         \begin{enumerate}[label=Subcase (\arabic{enumi}), leftmargin=2.25cm]
         \item[] Then $a_{n+1} = a_{n+2} = \cdots = a_{2n} \neq 0$ \& $k_{2} = k_{3} = \cdots = k_{n} = 0$. \label{case:2i}
         \end{enumerate}
     \item Let $a_{1} = 0$.
         \begin{enumerate}[label=Subcase (\arabic{enumi}), leftmargin=2.25cm]
         \item[] Then $a_{2} = a_{3} = \cdots = a_{n} = 0$. \label{case:3i}
         \end{enumerate}
     \item Let $a_{1} \neq 0$.
         \begin{enumerate}[label=Subcase (\arabic{enumi}.\roman*), leftmargin=2.25cm]
         \item Then $a_{2} = a_{3} = \cdots = a_{n} = 0$ \text{ and } $k_{2} = k_{3} = \cdots = k_{n} = 0$ \label{case:4i} or
         \item $a_{3} = a_{4} = \cdots = a_{n} = 0$ \text{ and } $k_{3} = k_{4} = \cdots = k_{n} = 0$, with $a_{2} = -a_{1}$ \text{ and } $k_{2} = 1$, hence $a_{0} = 0$ \label{case:4ii}.
         \end{enumerate}
\end{enumerate}
\begin{enumerate}

\item[] Regard \ref{case:1i} as the necessary case of $w = 0 \in \mathbb{C}[\mathbf{x}] \otimes \mathbb{C}^{1|2n}$.

\item[] From \ref{case:1ii}: We have  $X_{\delta_{i} - \delta_{i+1}}(\mathbb{C}w^{\delta - \delta}_{v_{2n},k_{1}}) = 0$ for $1 \leq i \leq n-1$. 

\item[] From \ref{case:2i}: We have  $X_{\delta_{i} - \delta_{i+1}}(\mathbb{C}w^{\delta - \delta}_{v_{0}, k_{1}} + \mathbb{C}w^{\delta - \delta}_{k_{1}}) = 0$ for $1 \leq i \leq n-1$.   

\item[] From \ref{case:4i}: We have $X_{\delta_{i} - \delta_{i+1}}(\mathbb{C}w^{\delta - \delta}_{v_{0}, k_{1}} + \mathbb{C}w^{\delta - \delta}_{v_{1},k_{1}}) = 0$ for $1 \leq i \leq n-1$.

\item[] From \ref{case:4ii}: We have $X_{\delta_{i} - \delta_{i+1}}(\mathbb{C}w^{\delta - \delta}_{k_{1}, x_{2}}) = 0$ for $1 \leq i \leq n-1$
\end{enumerate}
\ref{case:3i} implies one of the previous outcomes. 

We have shown that $w \in \mathbb{C}[\mathbf{x}] \otimes \mathbb{C}^{1|2n}$ is annihilated by $X_{\delta_{i} - \delta_{i+1}}$, with $1 \leq n-1$, if and only if $w$ is in the span of \[\{x_{1}^{k_{1}} \otimes v_{0},\, x_{1}^{k_{1}}\sum\limits_{j=1}^{n} x_{j} \otimes v_{n+j},\, x_{1}^{k_{1}} \otimes v_{2n},\, x_{1}^{k_{1}} \otimes v_{1},\, x_{1}^{k_{1}}x_{2}^{1} \otimes v_{1} - x_{1}^{k_{1}} \otimes v_{2} \}.\]
\end{proof}

Now we prove \Cref{prop:space-of-singular-vectors}.

\begin{proof}[Proof of \Cref{prop:space-of-singular-vectors}]
For $n \in \mathbb{Z}_{>0}$, let $V_{n} = \mathbb{C}[\mathbf{x}] \otimes \mathbb{C}^{1|2n}$. Denote by $V_{n}^{+}$ the subspace of singular vectors in the $\mathfrak{g}$-module $V_{n}$. Note that $V_{n}^{+} = V_{n}[-\delta] \cap V_{n}[\delta-\delta]$, which simplifies to $V_{1}^{+} = V_{1}[-\delta]$ when $n = 1$. 
Thus when $n > 1$, we appeal to Lemmas \ref{lem:deltaprimitivespace} and \ref{lem:deltadeltaprimitivespace}. For each $n > 1$, we have  
\begin{align}\label{eq:n>1prim}
w_{1} &\coloneqq w^{\delta}_{1,n} = w^{\delta - \delta}_{2n,0}\\
w_{2} &\coloneqq w^{\delta}_{2} = w^{\delta - \delta}_{v_{0},1} + \sqrt{2} w^{\delta - \delta}_{0}.
\end{align}
Recall the standard basis of $V_{n}$ (\Cref{standard-tensor-basis}). Then 
$V_{n}^{+} = V_{n}[-\delta] \cap V_{n}[\delta-\delta] = \mathbb{C}w^{1} \oplus \mathbb{C}w_{2}$
is a $2$-dimensional subspace of $V_{n}$ for $n > 1$. Otherwise, when $n = 1$, we appeal solely to Lemma \ref{lem:deltaprimitivespace}, and  Equations \eqref{eq:w1jdelta} - \eqref{eq:w3jdelta} give us 
\begin{align*}\label{eq:n=1prim}
w_{1} &\coloneqq w^{\delta}_{1,1}, \quad w_{2} \coloneqq w^{\delta}_{2}, \quad w_{3} \coloneqq w^{\delta}_{3,1}.
\end{align*}
Thus $V_{1}^{+} = V_{1}[-\delta] = \mathbb{C}w_{1} \oplus \mathbb{C}w_{2} \oplus \mathbb{C}w_{3}$ is a $3$-dimensional subspace of $V_{1}$. 
\end{proof}

	\printbibliography
	
\end{document}